\documentclass[12pt]{amsart}
\usepackage{amsmath,amsthm, amssymb}
\usepackage{subcaption}
\usepackage{graphicx} \graphicspath{{figs/}}
\usepackage{hyperref}
\usepackage{enumerate}



\newcommand{\ben}{\begin{enumerate}} 
\newcommand{\een}{\end{enumerate}}

\def\bea{\begin{eqnarray*}}
  \def \eea{\end{eqnarray*}}

\def\H{\mathbb{H}^2}
\def\dH{\partial\H}

\def\MS{M}
\def\Cov{M_3}

\def\Z{\mathbb{Z}} 
\def\FF{\mathbb{F}} 
\def\PP{\mathbb{P}} 
\def\SS{\mathbb{S}} 

\newcommand\PSL{\mathrm{PSL}}
\DeclareMathOperator\Stab{Stab}
\DeclareMathOperator\Vol{Vol}
\DeclareMathOperator\Tr{Tr}

\newcommand\abs[1]{\left| {#1} \right|}
\newcommand\isom{\simeq}

\newtheorem{thm}{Theorem}[section]
 
\newtheorem{lem}[thm]{Lemma}
\newtheorem{prop}[thm]{Proposition}

\theoremstyle{definition}

\theoremstyle{remark}
\newtheorem*{rem}{Remark}

\newtheorem{example}[thm]{Example}

\title[An upper bound]{An upper bound for the volumes of complements of periodic  geodesics}
\author{Maxime Bergeron}
\address{The University of British Columbia, Vancouver BC, Canada}
\email{mbergeron@math.ubc.ca}
\urladdr{http://www.math.ubc.ca/~mbergeron}
\thanks{M.B.\ was supported by an NSERC Alexander Graham Bell CGS-D Scholarship}

\author{Tali Pinsky}
\email{tali@math.tifr.res.in}
\urladdr{http://www.math.tifr.res.in/~tali/}

\author{Lior Silberman}
\email{lior@math.ubc.ca}
\urladdr{http://www.math.ubc.ca/~lior/}
\thanks{L.S.\ was partly supported by an NSERC Discovery Grant}

\date{\today}

\begin{document}

\begin{abstract}
A periodic geodesic on a surface has a natural lift to the unit tangent
bundle; when the complement of this lift is hyperbolic, its volume
typically grows as the geodesic gets longer.  We give an upper bound
for this volume which is linear in the geometric length of the geodesic.
\end{abstract}

\maketitle

\section{Introduction}
A closed curve on a surface $S$ can be naturally lifted to the unit tangent
bundle $T^1S$ by traversing the curve in a chosen direction and
associating to each point on the curve its tangent direction (note that
there are two lifts, corresponding to the two directions along the curve).
Such a lift is an embedding of $\SS^1$ into the $3\,$-manifold $T^1S$
and, considering its ambient isotopy class, we obtain
a knot.

If we equip $S$ with a hyperbolic Riemannian metric, the isotopy class of each
(non-peripheral and non-trivial) closed curve contains a periodic geodesic
(the representative of shortest length). 
The associated knot is then a closed orbit of the geodesic flow on $T^1S$ which does not depend on the chosen metric.
Such periodic geodesics have long been an object of interest  from the topological \cite{Fried:TransitiveAnosov,Thurston:GeomDynSurfaceDiff}
and dynamical \cite{Anosov:GeodesicFlows} points of view. However, except for a few promising results for closed geodesics on the modular surface \cite{dehornoy2015geodesic,Ghys:KnotsDynamics,kelmer2012quadratic, sarnak2010linking} and other triangle groups \cite{dehornoy2015geodesic}, they have essentially never been studied as knots.
One of the reasons for this is that these knots are embedded in $T^1S$ which, unless $S$  happens to be the modular surface,
cannot be embedded into $\mathbb{S}^3$. As such, there is a  limited supply of knot theoretic invariants available to try and characterize them.

Nevertheless,  the complement of a lift of a periodic geodesic in the unit tangent bundle
is a hyperbolic $3\,$-manifold as soon as the curve is filling
\cite{FoulonHasselblatt:ContactAnosov} in the surface. In this case,
its volume is a topological invariant and we give the first known estimate for it:
\begin{thm}\label{thm:mainthm}
Let $S = \H/\Gamma$ be a hyperbolic surface. There is a constant $C_S>0$
such that, for any finite set $\gamma$ of (primitive) periodic geodesics
on $S$, we have
\begin{equation}\label{eq:mainbound}
\Vol(T^1S\setminus \gamma) \leq C_S \abs{\gamma}.
\end{equation}
Here, $\abs{\gamma}$ denotes the total length of the geodesics in $\gamma$.
\end{thm}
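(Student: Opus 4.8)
The plan is to estimate $\Vol(T^1S\setminus\gamma)$ through the combinatorics of a triangulation of the complement, keeping that triangulation of size $O(\abs\gamma)$ by threading the lift of $\gamma$ through a single fixed triangulation of $T^1S$. Write $\hat\gamma\subset T^1S$ for the canonical lift (a disjoint union of \emph{embedded} circles, since geodesics cross transversally and so distinct strands carry distinct tangent directions), and let $\bar M=T^1S\setminus\operatorname{int}\nu(\hat\gamma)$ be the compact complement, a $3\,$-manifold with torus boundary whose interior carries the relevant complete hyperbolic metric. I would invoke two standard facts: first, for such an $\bar M$ one has $\Vol(\operatorname{int}\bar M)=v_3\,\|\bar M,\partial\bar M\|$, where $v_3$ is the volume of the regular ideal tetrahedron and $\|\cdot,\cdot\|$ the relative simplicial volume (Gromov; Thurston); second, $\|\bar M,\partial\bar M\|\le N$ as soon as $\bar M$ admits a triangulation with $N$ tetrahedra, since the relative fundamental class is then carried by those $N$ simplices. (As $v_3\|\bar M,\partial\bar M\|\ge\Vol$ in general, this also covers the borderline non-filling situations.) It therefore suffices to triangulate $\bar M$ with $O(\abs\gamma)$ tetrahedra.

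For that, first fix once and for all a smooth triangulation $\mathcal K$ of the closed $3\,$-manifold $T^1S=\Gamma\backslash\PSL_2(\R)$, chosen fine with respect to the $\PSL_2(\R)\,$-invariant metric: say of mesh $<\mu$, with geodesically convex closed stars and with each simplex nearly flat. The lift of a geodesic of length $\ell_i$ is the geodesic flow run for time $\ell_i$, so it has length $\asymp\ell_i$ in $T^1S$; hence $L:=\operatorname{length}(\hat\gamma)\le c_S\abs\gamma$. Next, subdivide $\hat\gamma$ into $O(L/\mu)=O(\abs\gamma)$ sub-arcs of length $<\mu$ with endpoints interior to $3\,$-simplices. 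Each such sub-arc, being a short orbit segment, lies in the convex star of a single simplex and is a nearly straight arc there; so $\hat\gamma$ meets the $2\,$-skeleton $\mathcal K^{(2)}$ transversally in $O(\abs\gamma)$ points, and for each $3\,$-simplex $\sigma$ the set $\hat\gamma\cap\sigma$ consists of $m_\sigma$ disjoint, nearly straight — hence unknotted and pairwise unlinked — properly embedded arcs, with $\sum_\sigma m_\sigma=O(\abs\gamma)$. Then $\sigma\setminus\operatorname{int}\nu(\hat\gamma)$ is a ball with $m_\sigma$ boundary-parallel arcs drilled out, carrying a boundary pattern of bounded combinatorial type, and it can be triangulated with at most $a+b\,m_\sigma$ tetrahedra for absolute constants $a,b$. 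Gluing these triangulations over the $3\,$-simplices of $\mathcal K$ (after a common subdivision along $\mathcal K^{(2)}$) produces a triangulation of $\bar M$ with $\sum_\sigma(a+bm_\sigma)=O(1)+O(\abs\gamma)$ tetrahedra, and the two facts above conclude.

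The hard part will be the geometric lemma that short orbit segments sit ``straight, unknotted and unlinked'' inside the fixed $\mathcal K$: this is what dictates choosing $\mathcal K$ carefully (convex stars, almost-flat simplices), after which it follows from the bounded geometry of the geodesic flow below the scale $\mu$ — but it must be proved by hand rather than cited. The remaining ingredients are routine: the homeomorphism type of $\bar M$, hence its volume, depends only on the isotopy class of $\hat\gamma$, so the small perturbations needed to make $\hat\gamma$ transverse to $\mathcal K^{(2)}$ cost nothing; a ball with $m$ boundary-parallel unlinked arcs removed is elementarily triangulated with $O(m)$ tetrahedra; and when $S$ is non-compact or has cone points one runs the same argument on a fixed triangulation of the (truncated, if needed) unit tangent bundle $\Gamma\backslash\PSL_2(\R)$ and appeals to the relative form of Gromov's proportionality.
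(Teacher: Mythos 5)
Your reduction of the volume to a tetrahedron count is fine (Gromov--Thurston gives $\Vol(\operatorname{int}\bar M)\leq v_3 N$ for any triangulation of $\bar M$ by $N$ tetrahedra), and for a \emph{closed} surface the rest of your outline is plausible: there $T^1S$ is compact, the geodesic flow has a uniform flow-box scale, the arcs of $\hat\gamma$ in each cell are parallel vertical strands of a flow box (hence a trivial tangle with disjoint trivializing disks), and a linear-size triangulation is credible modulo the flow-box lemma you flag and the genuinely fiddly issue of extending a prescribed boundary triangulation into each drilled cell with only linearly many tetrahedra. That would be a different and in some ways more direct route than the paper's, which never treats closed surfaces head-on.

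However, there is a genuine gap at the heart of the argument, and it is exactly the difficulty the theorem is about: when $S$ has cusps --- in particular for the modular surface $\MS$, which is the paper's base case and is covered by the statement --- the claim that $\hat\gamma$ meets a fixed $2$-skeleton in $O(\abs{\gamma})$ points is false. A periodic geodesic with a large continued-fraction coefficient $k$ (e.g.\ the geodesic coded by $x^ky^m$ on $\MS$) spends time only about $\log k$ in the cusp but wraps about $k$ times around it; its lift winds $k$ times around a core circle of $T^1\MS$ and therefore crosses any fixed $2$-skeleton at least on the order of $k$ times, while $\abs{\gamma}\asymp\log(km)$. Equivalently, no uniform mesh $\mu$ exists: deep in the cusp a sub-arc of length $\mu$ fails to lie in the star of a single simplex, and your count degenerates to a bound that is linear in the winding number (the word length $\sum_i(k_i+m_i)$ of the symbolic code), which is exponentially larger than the geometric length. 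Your closing remark ``truncate if needed'' does not repair this, because closed geodesics leave every truncated thick part. Filling this gap requires a new idea beyond counting tetrahedra: one must show that the many tetrahedra produced by a deep cusp excursion are cheap because they all share an edge (the core of the winding), so that by concavity of the Lobachevsky function their total volume is $O(\log(\#\text{tetrahedra}))=O(\text{length of the excursion})$. That concavity estimate is the key lemma of the paper (its Proposition~\ref{prop:log}), and your proposal has no substitute for it.
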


\begin{rem}
The length $\abs{\gamma}$ is measured with respect to the fixed hyperbolic
metric on $S$.  We emphasize, however, that the volume is topological as is
the linearity of the bound.  Changing the metric will only affect the constant.
\end{rem}

Our proof of Theorem~\ref{thm:mainthm} begins with the modular
surface $\MS$. In this case, there is a  symbolic description of the closed geodesics: they can be coded by their  continued fraction expansion  
 as  done by E.\ Artin \cite{Artin:SymbolicCoding,Series:ModularSurfaceContinuedFractions}. On the other hand, this symbolic description was recently shown by Ghys \cite{Ghys:KnotsDynamics} to have a topological interpretation.
The key to our linear bound for $\MS$ lies in the connection between a 
decomposition of $T^1\MS$ 
and a complexity measure for the codes of the geodesics in $\gamma$ obtained via Ghys's explicit   \emph{template}  (c.f.
Section~\ref{sec:coding}).

\begin{rem}
In Example~\ref{exa:bounded-volume}
we exhibit a family of arbitrarily long geodesics on the modular surface
whose associated knot complements have uniformly bounded volume.
Nevertheless, numerical evidence
\cite{BrandtsPinskySilberman:KnotVolumesNumerics_preprint}
indicates that, in some situations, the growth is  linear in the
geometric length of the geodesics.
\end{rem}

\begin{rem}
It is interesting to compare our result for the modular surface with Gu{\'e}ritaud and Futer's
volume estimates for once-punctured torus bundles and two-bridge link
complements \cite{Gueritaud:CanonicalTriangulations}.  They use the same symbolic coding 
 for elements in $SL_2(\Z)$.   However, their volume bounds
are linear in the period of the continued fraction expansion, whereas ours
are linear in the geometric length (c.f. Bridgeman \cite{bridgeman1998bounds}).
\end{rem}

The paper unfolds as follows:
In Section~\ref{sec:coding}, we review the coding of geodesics
on the modular surface $\MS$ by positive words.  In Section~\ref{sec:combupperbound}, we obtain
a volume bound in terms of the combinatorics of the coding, from which we deduce
Theorem~\ref{thm:mainthm} for $\MS$. 
Finally, in Section~\ref{sec:upperbound}, we deduce the general case of  Theorem~\ref{thm:mainthm}
by relating covers of punctured surfaces to covers of $\MS$.

We are grateful to Juan Souto for suggesting that we study this invariant
and for offering useful insights.  We would also like to thank Yair Minsky, Jessica Purcell
and Kasra Rafi for helpful discussions.
\section{Background: Coding of geodesics on the modular surface}
\label{sec:coding}
In this section, we review Ghys's construction \cite{Ghys:KnotsDynamics} of a template 
 and the associated coding for the geodesic flow
on the modular surface.

We start with the isomorphism $\PSL_2(\Z)\isom C_2*C_3$.
For later reference, we fix elements $\kappa_0$ and $\omega\in\PSL_2(\Z)$ of
order $2$ and $3$, respectively, so that
$\PSL_2(\Z)=\left<\kappa_0\right>*\left<\omega\right>$.
Restricting every homomorphism to the generating set, we see that
$\PSL_2(\Z)$ has an essentially unique surjection onto $C_3$, and hence a
unique normal subgroup $\Gamma_3$ of index $3$.
We write $\Cov := \H/\Gamma_3$ for the associated hyperbolic manifold,
the unique normal three-fold cover of $\MS$, depicted in Figure \ref{fig:M3}.

\begin{figure}[ht]
\includegraphics[width=7cm]{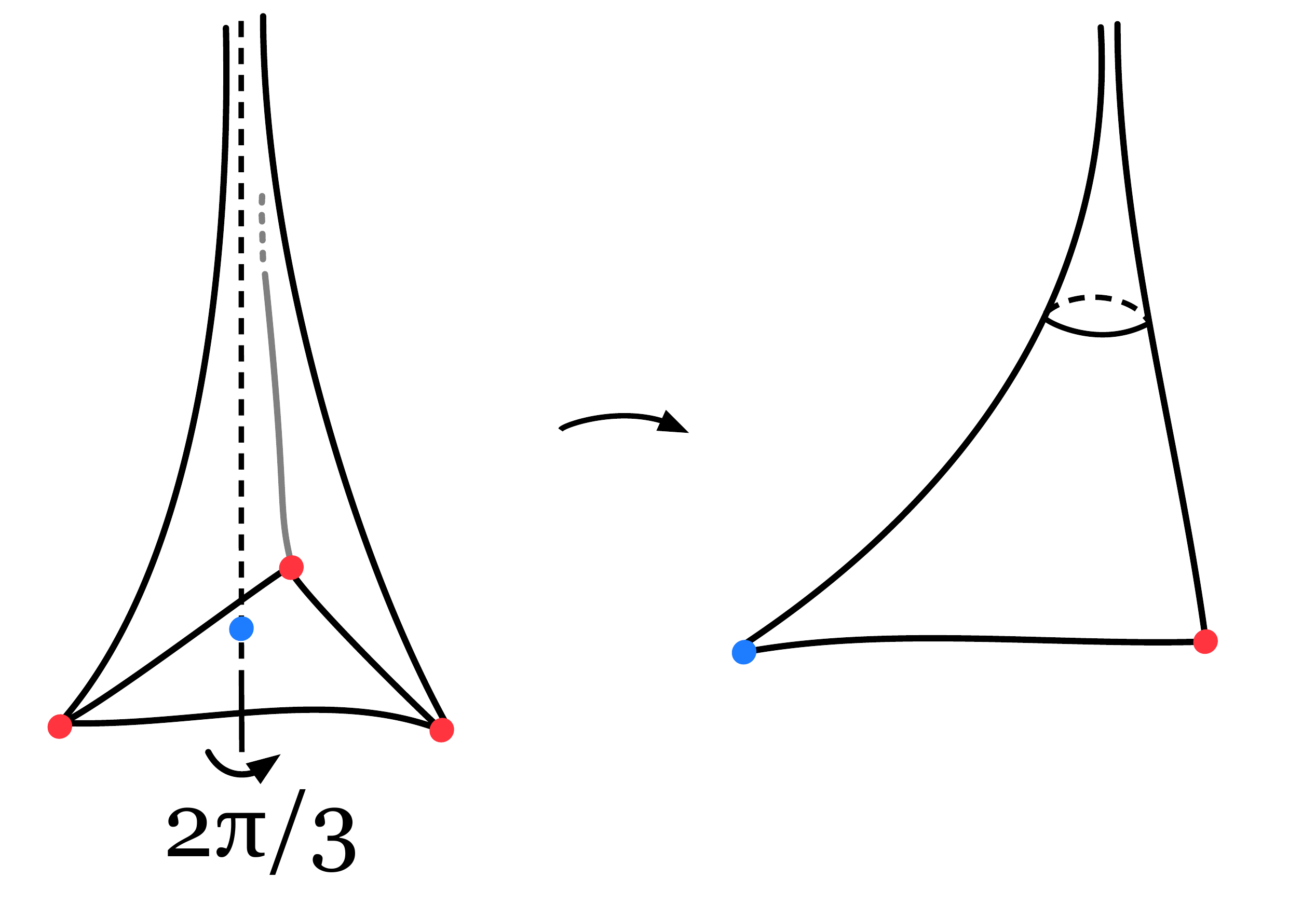}
\caption{The three-fold cover $\Cov$ of the modular surface $\MS$.}\label{fig:M3}
\end{figure}

\begin{rem}
For an alternative construction of $\Gamma_3$, note that
$\PSL_2(\FF_3) \isom S_4$ (consider the action on $\PP^1(\FF_3)$)
and that the $2$-Sylow subgroup of $S_4$ is normal of 
index $3$; this gives a surjection onto $C_3$ and shows that the kernel
contains $\Gamma(3)$ so that $\Gamma_3$ is a congruence subgroup.
\end{rem}

Let $p\in \H$ be the point fixed by $\omega$ and notice that, since 
$\Stab_{\PSL_2(\Z)}(p)$ coincides with $\left<\omega\right>$,  $p$
is not a fixed point of an elliptic element of $\Gamma_3$.
It follows that the associated Dirichlet domain $U_3$ (the set of points
of $\H$ closer to $p$ than to any other point of the orbit $\Gamma_3\cdot p$)
is a fundamental domain for $\Cov$.
In fact, as shown in Figure \ref{fig:3cover}, $U_3$ is an ideal triangle.
Moreover, $\kappa_0$ fixes a point $q_0$ along one of the sides of this
triangle which we
label $J_0$, and its conjugates $\kappa_i = \omega^i \kappa_0 \omega^{-i}$
fix points $q_i = \omega^i q_0$ along the other two sides $J_1$ and $J_2$.
For convenience (as shown in the figure) we choose our identification of $\H$
with the disc so that $p$ is the centre of the disc.  In that case $\omega$
acts by rotation by the angle $2\pi/3$, cyclically permuting
the vertices of the triangle, the arcs $J_i$ connecting them, and the
elliptic fixed points $q_0,q_1$ and $q_2$.

We now use this picture to study certain geodesics on $\MS$.
Observe first that any set $\gamma$ of periodic geodesics lifts to a set
$\tilde\gamma$ of closed curves in $\Cov$ (each periodic geodesic in $\MS$
lifts either to three periodic geodesics of the same length, or to a single
geodesic of three times the length, but this is immaterial for our arguments).
Moreover, any geodesic in $\Cov$ has a lift to an (infinite) geodesic on $\H$
connecting two points on the boundary. This lift may be chosen to cross any
particular fundamental domain for $\PSL_2(\Z)$, specifically $U_3$.
If the geodesic is periodic, its ends cannot lie on
the cusps of $\PSL_2(\Z)$ and, in particular, on the vertices of $U_3$.
Hence, acting by the element of order $3$, we may choose the lift to start at
$I_0 \subset \dH$, so that it will enter the triangle $U_3$ through its
side $J_0$.

\begin{figure}[ht]
\includegraphics[width=7cm]{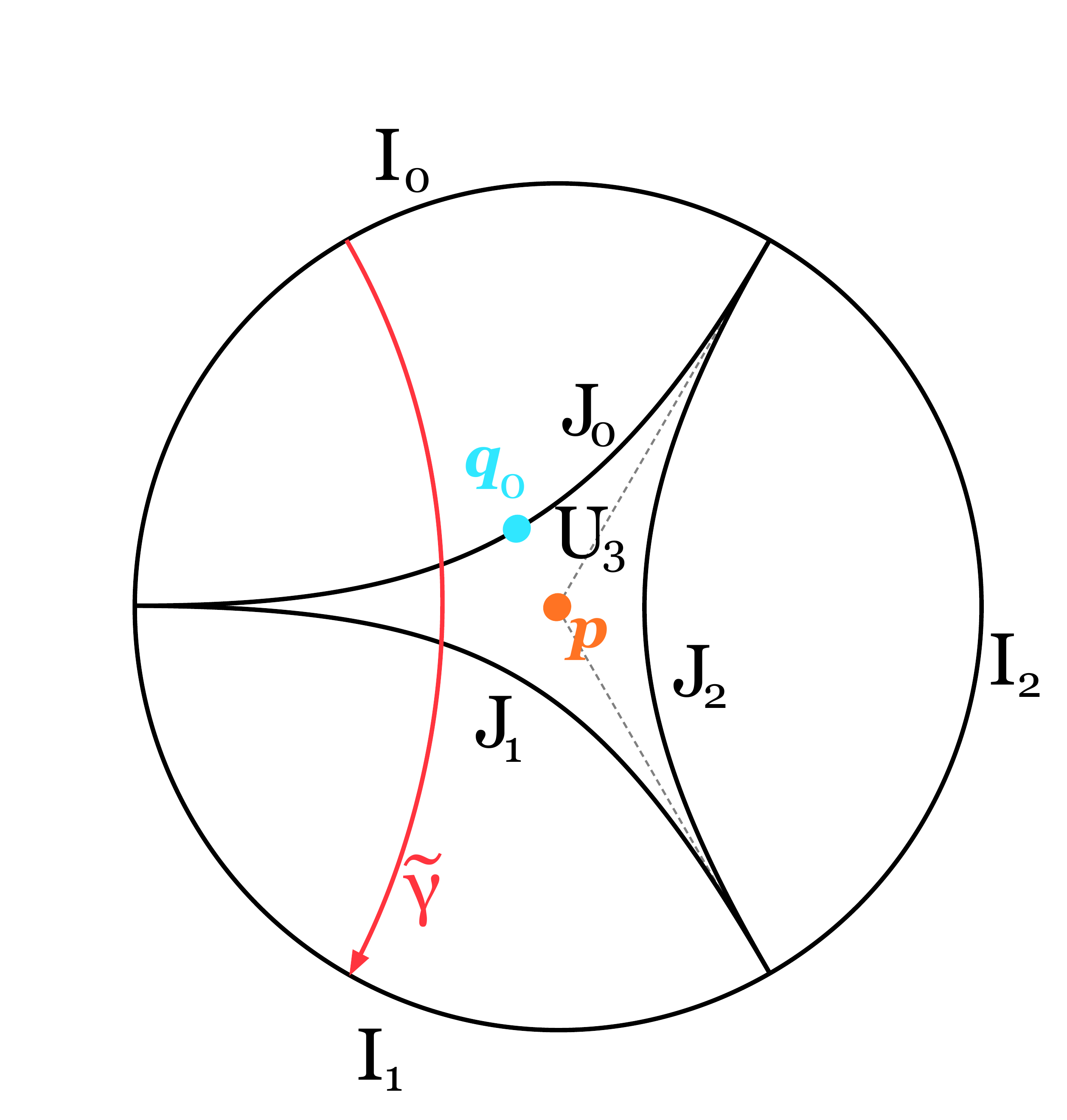}
\caption{A fundamental domain, $U_3$, for a three-fold cover $\Cov$ of the modular surface.}\label{fig:3cover}
\end{figure}

We have seen that it is enough to encode geodesics through $U_3$ starting at
$I_0$.  Accordingly, let $\tilde\gamma$  be such an infinite geodesic.  We now
construct a different (disconnected) lift of $\gamma$, consisting of a sequence of segments in $U_3$.
The first part of this new lift will be the segment of $\tilde\gamma$ between
its entry point to $U_3$ along $J_0$ and its exit point along either $J_1$ or
$J_2$.  We begin our code by $x$ or $y$ depending on the two possibilities.

Suppose our segment ends on $J_i$.  Acting by $\kappa_i$, the remaining
ray of $\tilde\gamma$ (the part after the end of the segment) now
\emph{begins} on $J_i$ (but usually not on the point where the segment
ended). Rotating by a power of $\omega$, we may assume instead that the
remaining ray enters again via $J_0$.  It will then exit via one of the
other sides and once again: we break off a segment, record a letter $x$ or $y$,
apply $\kappa_1$ or $\kappa_2$ and a rotation, and continue as before.

If $\tilde\gamma$ projects to a periodic geodesic $\gamma$ on $\MS$,
the resulting infinite word will be periodic.  In that case the
\emph{code} $w_\gamma$ for $\gamma$ will be the the primitive part of this periodic word.
We write $n_\gamma$ for the number of (cyclic) subwords of the form $xy$
in $w_\gamma$.  This corresponds to the period in the continued fraction
expansion of $\gamma$.  In more generality, for
$\tilde\gamma=\cup_{i=1}^k\tilde\gamma_i$ (a set of geodesics projecting to
a collection of periodic geodesics $\gamma$ on $\MS$) we define $n_\gamma$
to be the sum $n_\gamma=\sum_{i=1}^k n_{\gamma_i}$.
It is shown in \cite{Ghys:KnotsDynamics} that, up to cyclic permutation,
the word $w_\gamma$ only depends on the projection of $\tilde\gamma$ to
$\MS$ and, conversely, that any such word encodes a periodic geodesic.

As noted in the introduction, this coding also arises from the continued
fraction expansion of a primitive modular geodesic (c.f. Series  \cite{Series:ModularSurfaceContinuedFractions}). 
There, $n_{\gamma}$ is exactly half the period of the (even) continued
fraction corresponding to $\gamma$. 
Constructing the code as above, however, yields a natural interpretation in terms of a \emph{template} embedded in $T^1\MS$ whose
existence is due to Birman and Williams \cite{BirmanWilliams:KnottedOrbitsII}
and which was explicitly constructed by Ghys \cite{Ghys:KnotsDynamics}.
This template is a branched surface with boundary, equipped with a
semi-flow such that any finite set $\tilde\gamma$ of geodesics may be
deformed into closed flow orbits on the template by an ambient isotopy
(of $\tilde\gamma$ together with the tangent direction). 
 Ghys \cite{Ghys:KnotsDynamics} showed that the template can be
chosen to have a single branch-line lying over $J_0$ in $T^1\MS$ together
with two bands such that, after the isotopy, one contains the geodesic
segments passing in $U_3$ from $J_0$ to $J_1$ while the other contains
the segments passing from $J_0$ to $J_2$.  From this point of view the
coding given above describes the sequence by which the image of
$\tilde\gamma$ under the isotopy travels through the two template bands.
 Figure \ref{Ghys's template} depicts the template with its
embedding in $T^1\MS$, i.e., the complement of a trefoil knot in $\mathbb{S}^3$.

\begin{figure}[ht]
\includegraphics[width=6cm]{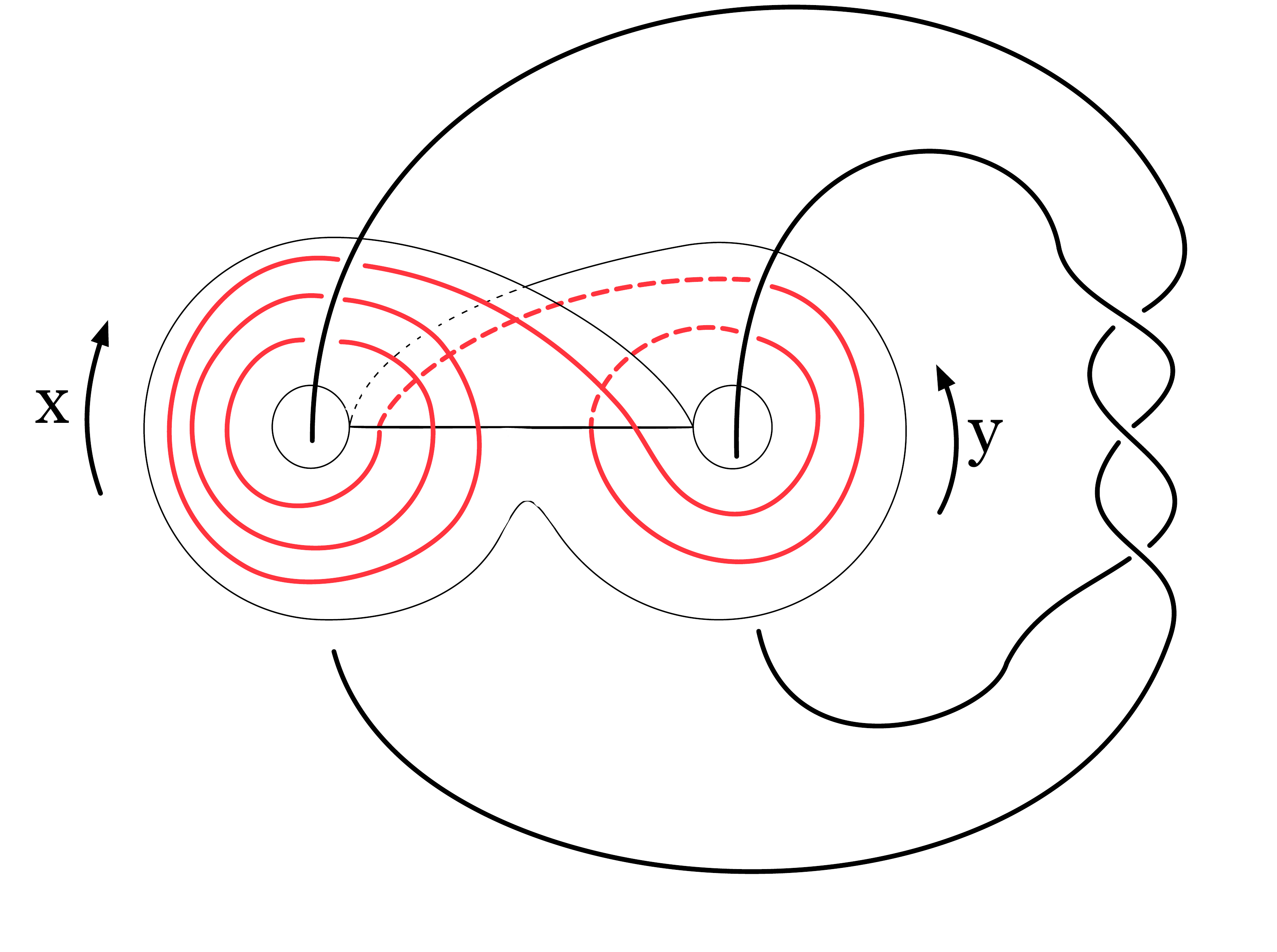}
\caption{Ghys's template for the modular surface embedded in its unit tangent bundle (the complement of a trefoil in $\mathbb{S}^3$) along with a   periodic
orbit corresponding to the code word $x^3y^2$.}\label{Ghys's template}
\end{figure}

Since our first upper bound on the volume of the complement of the lift of $\gamma$ in $T^1\MS$ will be obtained in terms of the combinatorics of $w_\gamma$, we will need the following relation between this codeword and the length of $\gamma$: 
\begin{lem}\label{lem:length}
The length $L$ of the closed geodesic $\gamma$ coded by the word
$x^{k_1}y^{m_1}\dots x^{k_{n_{\gamma}}}y^{m_{n_\gamma}}$
(recall that $k_i,m_i\in \Z_{\geq 1}$) satisfies
\begin{align*}
  L & \geq \sum_{i=1}^{n_\gamma} \left(\log(k_i) + \log(m_i)\right),\text{ and} \\
  L & \geq (\log 2)n_\gamma.
\end{align*}
\end{lem}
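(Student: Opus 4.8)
The plan is to relate the combinatorial data $(k_i, m_i)$ to the trace (hence the translation length) of the hyperbolic matrix in $\PSL_2(\Z)$ representing $\gamma$, using the classical dictionary between the $x/y$ code and continued fractions. Recall that the code word $x^{k_1} y^{m_1} \cdots x^{k_{n_\gamma}} y^{m_\gamma}$ corresponds, via Series's coding \cite{Series:ModularSurfaceContinuedFractions}, to the conjugacy class of a product of the form $T^{k_1} S^{m_1} \cdots T^{k_{n_\gamma}} S^{m_{n_\gamma}}$ (for suitable fixed parabolic/elliptic generators $T, S$ realizing the two template bands), and the geometric length $L$ of the closed geodesic $\gamma$ is $2\operatorname{arccosh}\!\left(\tfrac{1}{2}\abs{\Tr A}\right)$ where $A$ is this product. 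So everything reduces to bounding $\abs{\Tr A}$ from below in terms of the exponents $k_i, m_i$.

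The key step is a positivity/monotonicity estimate for products of the relevant elementary matrices. First I would normalize: in $\PSL_2(\Z) \cong C_2 * C_3$, choose the two building blocks so that each factor $x^{k}$ and $y^{m}$ is represented by a matrix with nonnegative entries (this is exactly the point of the positive coding — the template bands correspond to the two generators of the free sub-semigroup). Then a finite product of matrices with nonnegative entries, each of which has at least one entry growing linearly in its exponent, has all entries nonnegative, and by a straightforward induction its trace dominates a product over the blocks: one shows $\Tr A \geq \prod_i k_i \prod_i m_i$, or at the very least $\Tr A \geq \prod_i \max(k_i, 2) \prod_i \max(m_i,2) / c$ for an absolute constant, by tracking how a single large entry propagates through the product. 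Taking logarithms and using $\operatorname{arccosh}(t) \geq \log t$ for $t \geq 1$ (so $L = 2\operatorname{arccosh}(\tfrac12\abs{\Tr A}) \geq 2\log(\tfrac12 \abs{\Tr A}) \geq \log\abs{\Tr A}$ once $\abs{\Tr A} \geq 2$, which holds for any hyperbolic element) yields $L \geq \sum_i (\log k_i + \log m_i)$, the first inequality.

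The second inequality, $L \geq (\log 2) n_\gamma$, then follows from the first together with a base case: even when all $k_i = m_i = 1$, each full period contributes a definite amount to the length. Concretely, the matrix $(T S)^{n}$ (the worst case, all exponents equal to $1$) is hyperbolic with trace growing like $\lambda^{n}$ for the fixed eigenvalue $\lambda$ of $TS$ (here $\lambda$ is a fixed real number $> 1$, e.g.\ related to the golden ratio in the standard even continued fraction normalization), so $L \geq 2n\log\lambda \geq (\log 2) n_\gamma$ after checking $2\log\lambda \geq \log 2$ in the chosen normalization; for the general case one combines this with the first displayed inequality by noting $\log k_i + \log m_i \geq 0$ always, and adding the guaranteed per-period contribution.

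The main obstacle I anticipate is pinning down the precise matrix normalization so that the blocks $x^k, y^m$ genuinely have nonnegative entries and the product trace is genuinely $\geq \prod k_i \prod m_i$ rather than merely $\gg$ it — subtleties with $C_3$ versus $C_\infty$ factors, and with whether "$y$" corresponds to $S$ or to $S^{-1}$ in a given convention, can shift the estimate by bounded factors. These bounded factors are harmless for Theorem~\ref{thm:mainthm} but must be absorbed cleanly, and a robust way to do so is to prove the weaker $L \geq \sum_i(\log k_i + \log m_i) - C n_\gamma + c\, n_\gamma$ form first and then observe that the stated clean inequalities follow because the relevant constant in the right normalization is actually nonnegative. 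I would carry this out by working directly with the $2\times 2$ matrices and an explicit induction on $n_\gamma$, keeping the bookkeeping of entry sizes rather than appealing to any black-box length formula beyond $L = 2\operatorname{arccosh}(\tfrac12\abs{\Tr A})$.
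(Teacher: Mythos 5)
Your plan is essentially the paper's proof: represent the code word by the explicit nonnegative parabolic matrices $x=\left(\begin{smallmatrix}1&1\\0&1\end{smallmatrix}\right)$, $y=\left(\begin{smallmatrix}1&0\\1&1\end{smallmatrix}\right)$, bound the trace below by $\prod_i(k_im_i+1)$ via entrywise monotonicity of products of nonnegative matrices, and convert the trace bound to a length bound. The normalization worries you raise dissolve once you write these matrices down (note $x^ky^m=\left(\begin{smallmatrix}km+1&k\\m&1\end{smallmatrix}\right)$), and the second inequality falls out immediately from $k_im_i+1\ge 2$ rather than needing a separate eigenvalue computation for $(xy)^n$.
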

\begin{proof}
Letting $x = \begin{pmatrix} 1 & 1 \\ 0 & 1\end{pmatrix}$
and $y = \begin{pmatrix} 1 & 0 \\ 1 & 1\end{pmatrix}$, the geodesic $\gamma$
corresponds to the conjugacy class in $\PSL_2(\Z)$ of the element
$\prod_{i=1}^{n_\gamma} x^{k_i} y^{m_i}$.  We note that
$$x^k y^m = \begin{pmatrix} km+1 & k \\ m & 1\end{pmatrix}\,.$$
Since the entries of a product of matrices are monotone in the entries of
the factors (when the entries are non-negative) we see that 
$$ \gamma = \prod_i x^{k_i} y^{m_i} \geq
  \begin{pmatrix} \prod_i (k_i m_i + 1) & 0 \\ 0 & 1\end{pmatrix}.$$
The entries of this matrix are connected to the length by
$\Tr(\gamma) = e^{L} + e^{-L}$, so that
$e^L \geq \Tr(\gamma) - 1 \geq \prod_i (k_i m_i + 1)$.  The first claim
follows immediately, and for the second we note that $k_i m_i +1 \geq 2$.
\end{proof}


\section{Triangulation of link complements and the combinatorial upper bound}
\label{sec:combupperbound}
Let $\gamma$ be a (filling) collection of periodic geodesics on $\MS$, let $w_\gamma$ be their coding and consider their lifts $\tilde{\gamma}$ in $T^1\MS$, isotoped onto Ghys's template as in the previous section. Proceeding to drill out $\tilde\gamma$
from $T^1\MS$, we obtain a hyperbolic $3$-manifold. We will estimate
the volume of this manifold by constructing a triangulation.  However,
the number of tetrahedra in our triangulation is proportional to the word
length of $w_\gamma$, so directly using the boundedness of volumes of
hyperbolic tetrahera does not give a useful bound.
The following example illustrates the problem:

\begin{example}\label{exa:bounded-volume}
Let $\gamma$ be a geodesic on $\MS$ coded by the word $x^ny^m$ for some natural numbers $n$ and $m$.
Its length is roughly proportional to $\log(n+m)$ and, in particular, tends
to infinity with $n$ and $m$.  The number of crossings on the template (which will be proportional  to the number in tetrahedra in our triangulation) is then $n+m$.
On the other hand, these geodesics correspond to a knot winding more and more
around the trefoil, first in one template ear and then in the other.
Thus, their volumes are all bounded by the volume of
$T^1\MS\setminus(\gamma\cup\alpha\cup\beta)$ where $\gamma$ is the geodesic
corresponding to $xy$, and $\alpha$ and $\beta$ are both trivial knots
encircling one strand of $\gamma$ and the strand of the trefoil in the
centre of the corresponding ear (c.f Adams \cite{adams1985thrice}). 
Note that $n_\gamma=1$ for any geodesic $\gamma$ in this family. 
\end{example}

In order to circumvent this difficulty, in Section \ref{subsec:piece-volume-bounds} we will establish that large groups of tetrahedra in our triangulation
share an edge. The volume of a tetrahedron can be expressed in terms of its
dihedral angles via the so-called Lobachevsky function, and we will analyze
it to show that the sum of the volumes of the tetrahedra in the group grows
sub-linearly in the number of tetrahedra. This will allow us to attain
our linear bound for $\MS$.
\begin{rem}
	With the hope that no confusion will arise, we will no longer distinguish a collection $\gamma$ of geodesics on $\MS$ from its lift $\tilde\gamma$ to $T^1\MS$. Both of them will be  henceforth denoted by $\gamma$.
\end{rem}

\subsection{Triangulation}\label{subsec:triangulation}
Let us fix once and for all a finite collection $\gamma$ of periodic geodesics on $\MS$.  In order to triangulate $T^1\MS\setminus \gamma$, we first decompose it  into
pieces of three types ($A$, $B$ and $C$) along with a ``remainder'' ($D$). There will be two pieces of type $A$, two pieces of type $B$, four pieces of type $C$ and one piece of type $D$. 
As such, writing $\Vol(A)$ (respectively $\Vol(B)$, $\Vol(C)$ and $\Vol(D)$) for an upper bound
on the volume of a piece of type $A$ (respectively $B$, $C$ and $D$), the volume of $T^1\MS$ will then be bounded from above by
\begin{equation}\label{decomposition equation}
2\Vol(A)+2\Vol(B)+4\Vol(C)+\Vol(D).
\end{equation}

\subsubsection{Cutting}
Considering Figure \ref{Ghys's template} once again, we define some cutting segments on the template that  will be used to decompose $T^1\MS\setminus\gamma.\,\,$ 
First, follow the flow-lines up from the critical point into both template ears. 
After propagating up past the branch-line, stop at some point $p$ in the $x$
ear and some point $q$ in the $y$ ear.  Then, connect each of these points to
the outer boundary of the template ear by two additional segments as shown in Figure \ref{cutting}.

\begin{figure}[ht]
\includegraphics[width=6cm]{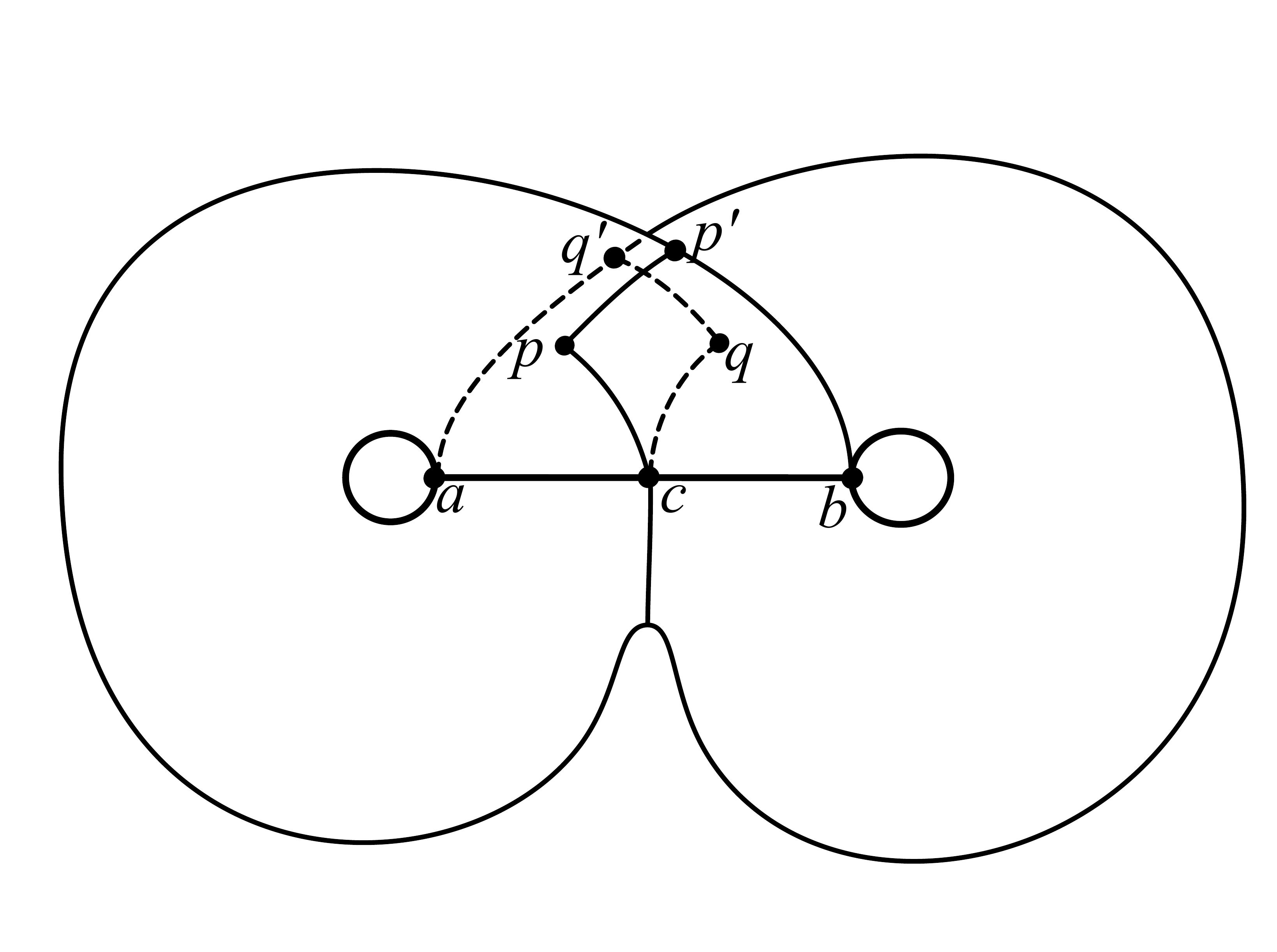}
\caption{The cutting segments on the template; the segments connecting $p$ and $q$ to the critical point $c$ are chosen to lie along the flow-lines.}\label{cutting}
\end{figure}

\subsubsection{Pieces}
We can now describe the four different types of pieces shown in Figure \ref{blocks}.
All relative position adjectives refer to the figure. 

\begin{description}
\item[Type $A$] Choose a point $o_1$ on the trefoil,
above the centre of the $x$ ear, and use it to cone-off the (upper side of the) $x$ ear without including the quadrilateral $cbp'p$. This is the first piece of type $A$, we call it $A_1$.
 The second piece of type $A$ is the cone of the bottom side
of the $y$ ear with a point $o_1'$ on the trefoil below the centre of the $y$ ear. We call it $A_2$.


\item[Type $B$]   Consider the prism
obtained by taking the product of the $y$ ear without the quadrilateral
$qq'ac$ with the segment $cp$ as in  Figure \ref{blocks}(b).
The quadrilateral $cbp'p$ is contained in this cylinder
as $cb\times cp$ and the upper face of this cylinder is bounded by an edge connecting $p$ to itself along the outer side and an edge connecting $p'$ to itself along the inner side. Choose a point $o_2$ on the trefoil
above the centre of the $y$ ear and cone-off the upper face of this cylinder except for the segment $pp'$ which is coned-off to $o_2$ 
by two $pp'o_2$ triangles: one on each sides of the $\tilde\gamma$ strands coming into the $y$-ear. This ``coned-off'' cylinder is the first piece of type $B$, we call it $B_1$.
The second piece of this type is on the bottom side of
the $x$ ear, connected to a point $o_2'$ on the trefoil below the centre of the $x$ ear. We call it $B_2$.

\item[Type $C$] The four pieces of type $C$ are tetrahedra  used to
close the gaps between pieces of type $A$ and type $B$: 
 $pp'o_1o_2$ connects $B_1$ to $A_1$, $pp'o_2o_2'$ connects $B_1$ to $B_2$, $qq'o_1'o_2'$ connects $B_2$ to $A_2$ and $qq'o_2o_2'$ connects $B_2$ to $B_1$.

\item[Type $D$] There is only one piece of type $D$, it consists of the remainder of the complement of the trefoil in $\mathbb{S}^3$.
\end{description}

\begin{figure}[ht]
\begin{subfigure}{5cm}
                \includegraphics[width=5cm]{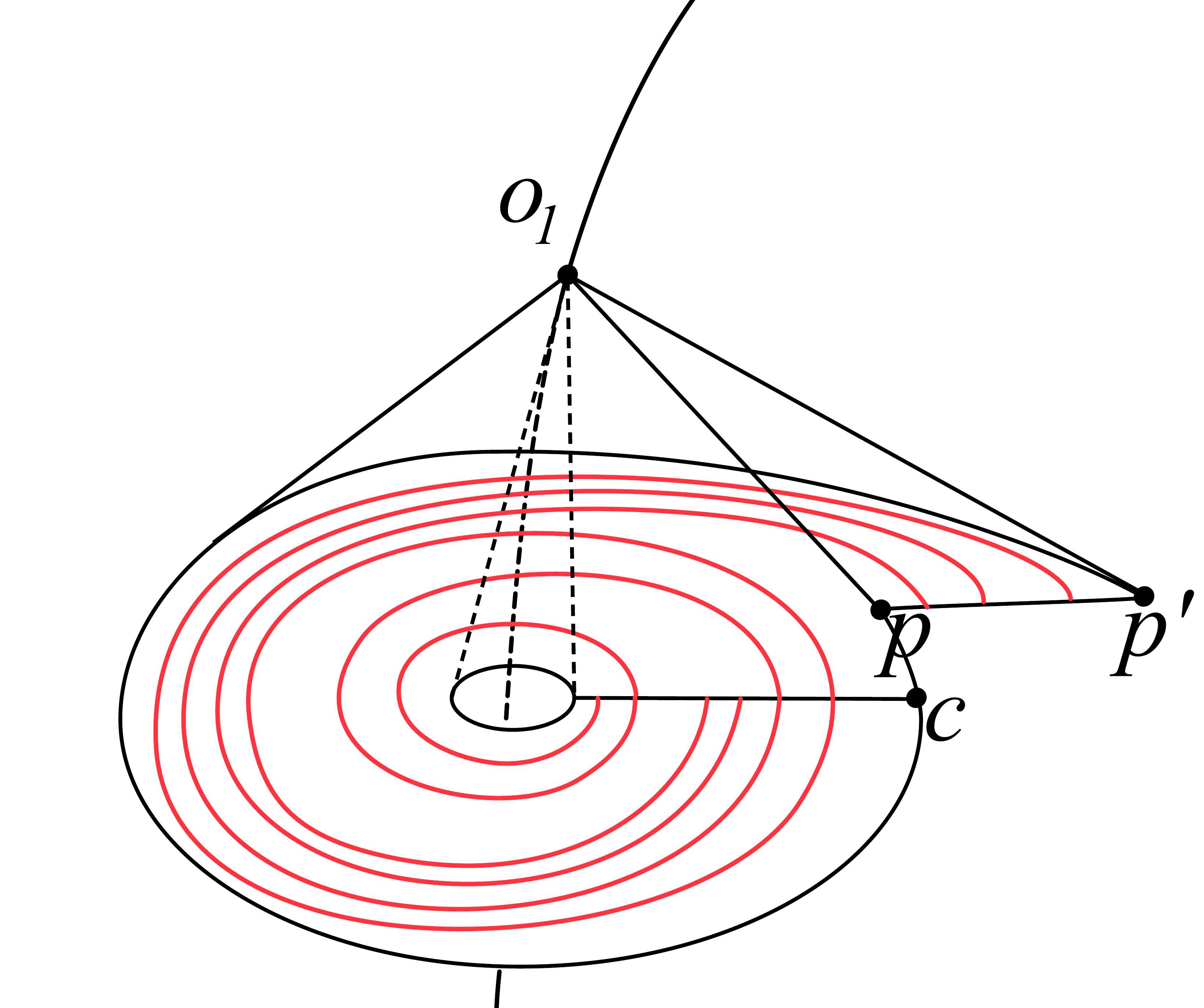}
                \caption{the $A_1$ piece}
        \end{subfigure}%
    \begin{subfigure}{5cm}
\includegraphics[width=5cm]{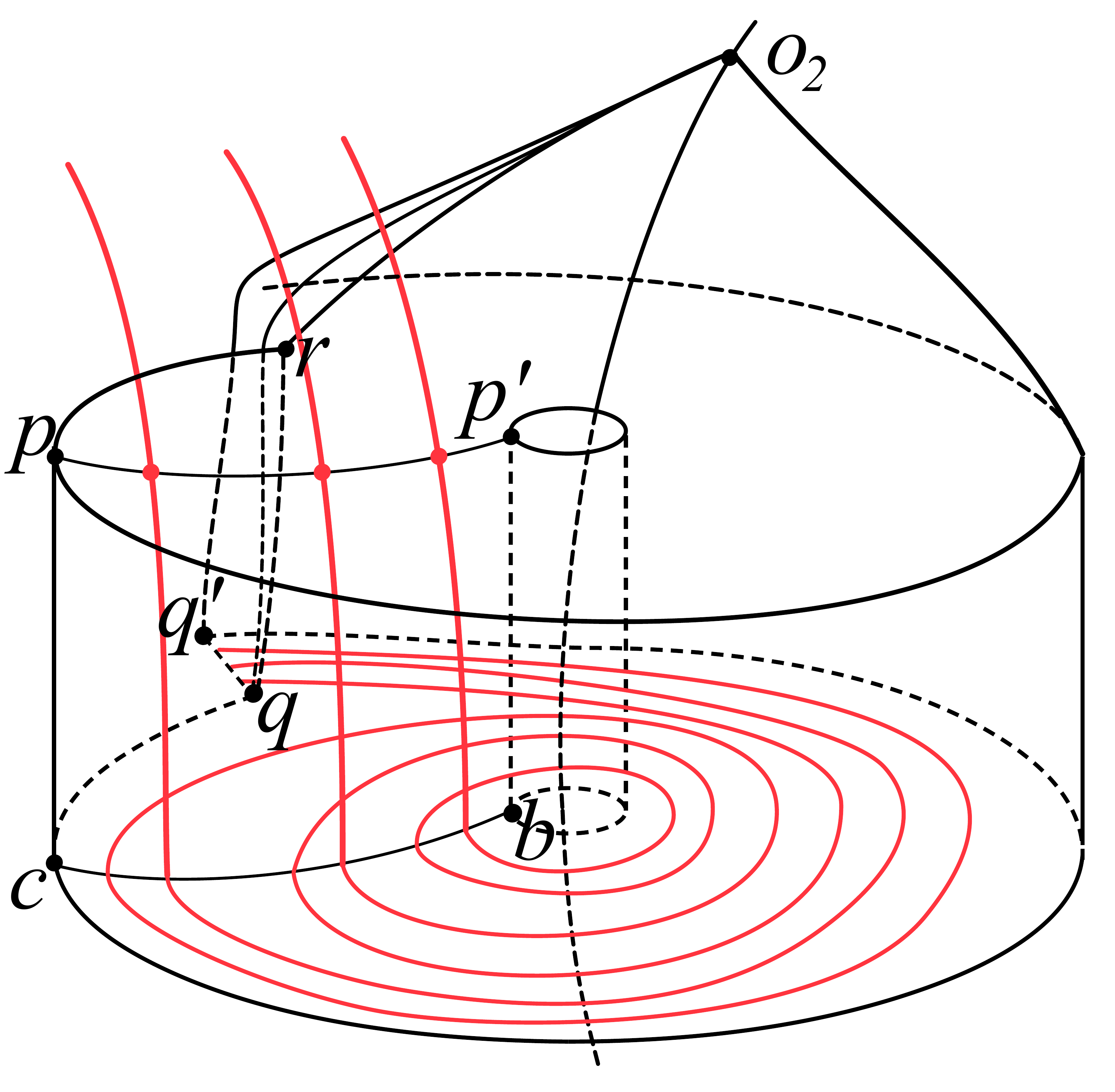}
\caption{the $B_1$ piece}
 \end{subfigure}
 \begin{subfigure}{4cm}
                \includegraphics[width=4cm]{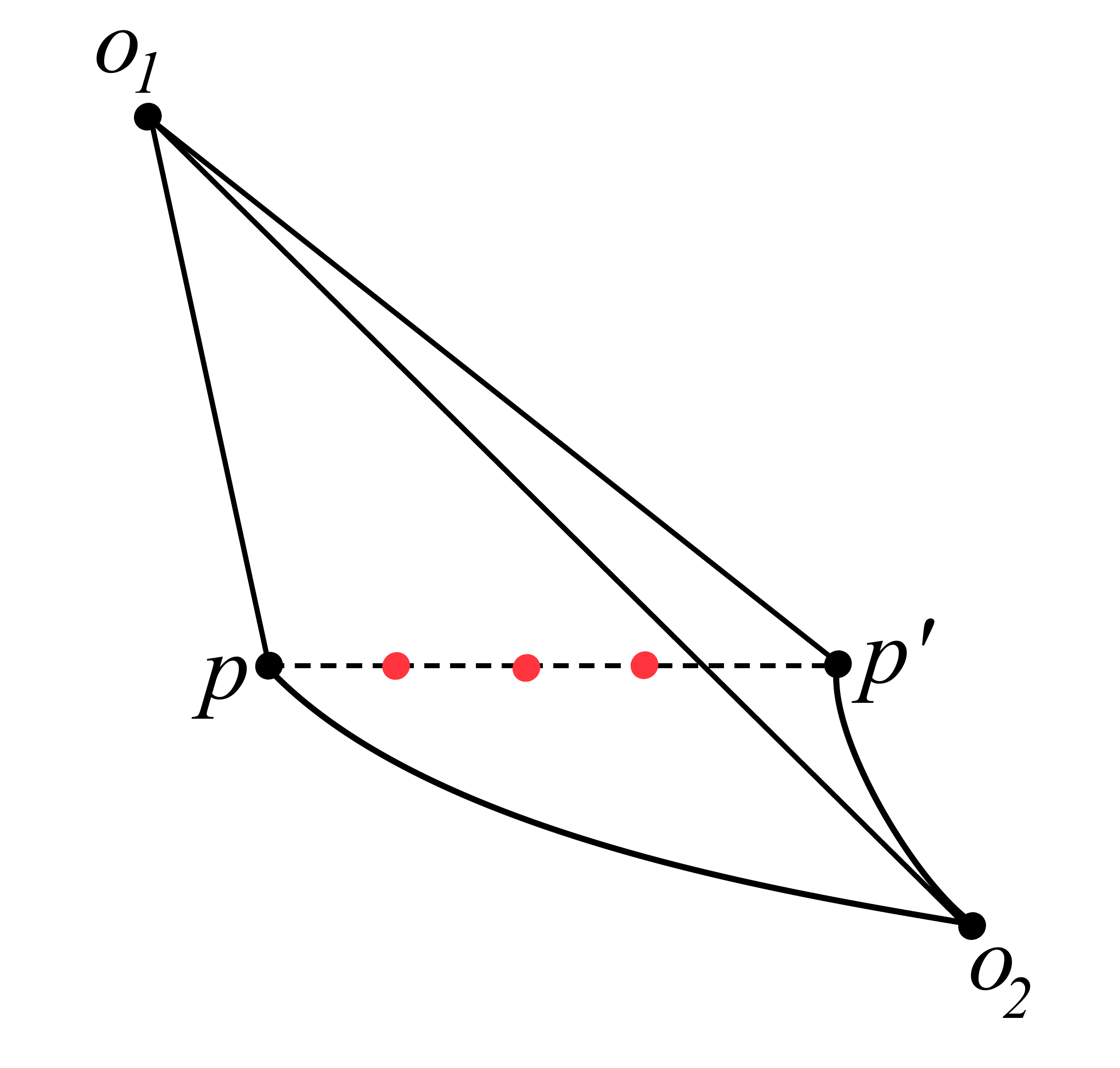}
                \caption{the $C_1$ piece}
        \end{subfigure}%
\caption{}\label{blocks}
\end{figure}

These pieces fit together as follows (as usual, relative position adjectives refer to Figure~\ref{blocks}):
\begin{enumerate}[(i)]
\item Piece $A_1$ is glued to piece $B_2$  along the $x$ ear.  It is
   glued to a $C$ piece that we will call $C_1$ along the triangle $pp'o_1$. The remainder of its coning
   face is glued to the $D$ piece.
\item Piece $B_1$ is glued to piece $A_2$ along the $y$ ear.  Recall that $B_1$ has 
   two $pp'o_2$ faces resulting from the double coning along $pp'$. It is glued to $C_1$ (connecting it to $A_1$)
   along its front $pp'o_2$ face  and it is glued to another $C$ piece which we call $C_2$ (connecting it to $B_2$) along its back $pp'o_2$ face.  It is glued to piece $B_2$ along the square $pcqr$
   and it is glued to yet another $C$ piece which we call $C_3$ along $qq'o_2$ (connecting it to $B_2$). 
   The remaining outer faces of piece $B_1$ are glued to the $D$ piece.
\item The situation for piece $A_2$ (respectively piece $B_2$) is analogous to that of piece $A_1$ (respectively piece $B_1$).
\item Each $C$ piece is connected to either $A_i$ and $B_i$ or $B_i$ and
   $B_j$. It is glued to the $D$ piece  along its two faces that do not intersect $\gamma$.
\end{enumerate}

\subsubsection{Pre-triangulation}\label{pre-triangulation}
Our next step is to triangulate the exterior faces of the pieces. This ``pre-triangulation" 
will then be completed to a triangulation of the interiors.  Gluing the
pieces along triangles will ensure the compatibility of our volume estimates.

We start with  piece $A_1$. There, we deem the points $o_1$, $p$, $p'$, $a$, $c$ as well as all intersections of $pp'$ and $ac$ with $\gamma$ to be vertices. After adding the edges indicated in Figure~\ref{blocks}, 
  our next set of edges consists  of  all resulting subsegments of  $ac$ and  $pp'$. The vertices along $pp'$ corresponding to intersections with $\gamma$ are then all  joined by edges to  $o_1$.
This turns the face $pp'o_1$ on which $A_1$ is glued to $C_1$ 
into a union of triangles. 
Moreover, the faces contained in the $x$ ear of the template band
along which piece $A_1$ is glued to  piece $B_2$ 
turn into either bigons, triangles or higher valence polygons as shown in Figure~\ref{fig:band}. 
In the latter cases, we fix any triangulation of the polygon within the
template ear.  In the case of a bigon, 
the two parallel edges between two $\gamma$ segments on a face of 
$A_1$ can be collapsed along the $\gamma$ segments in the direction of the template semi-flow,
resulting in a single edge along $pp'$. The exterior faces of piece $A_2$ are triangulated in an analogous manner.

\begin{figure}[ht]
\includegraphics[width=5.5cm]{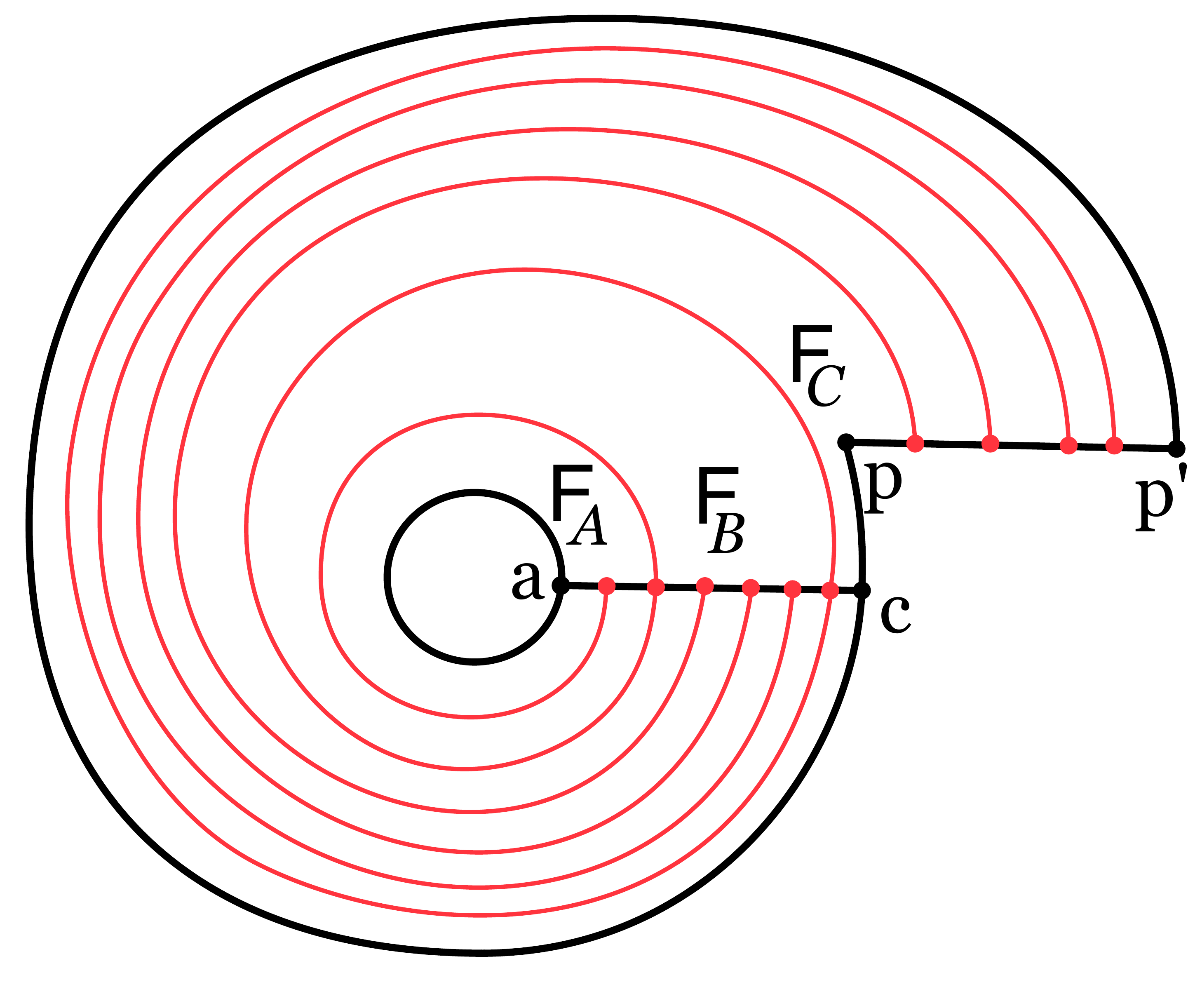}
\caption{Different polygons  induced on the $x$ ear of the  template band. Here, $F_A$ is a triangle, $F_B$ is a pentagon that would be subdivided into $3$ triangles, $F_C$ is a square and the other faces are all bigons which would collapsed into single edges along $pp'$.}\label{fig:band}
\end{figure}

To treat  piece $B_1$, observe that its face contained in the $y$ ear of the template  is already
triangulated by the choices and identifications made for the corresponding face of piece $A_2$. 
After adding the edges indicated in Figure~\ref{blocks}, to triangulate its remaining faces,  we first add edges  connecting all intersection points of $\gamma$ with $qq'$ to $o_2$  and then add edges connecting  all intersection points
of $\gamma$ with $pp'$ to $o_2$ in two different ways -- one in each of the two $pp'o_2$ faces  on each side of the strands coming in from the $x$-band of the template. Finally, we add a diagonal edge $pq$ to the square face $prqc$. The exterior faces of piece $B_2$ are triangulated in an  analogous manner.

To triangulate the exterior faces of pieces of type $C$, we simply connect each ideal vertex to the points $o_1$ and $o_2$ or $o_1'$ and $o_2'$.

%

\subsection{A bound for the modular surface}\label{subsec:piece-volume-bounds}
Recall that there is a uniform upper bound for the volume of geodesic
hyperbolic tetrahedra.  In particular, we can bound the volume of any piece
by the number of tetrahedra it contains.  In this section, we use this observation to establish Theorem~\ref{thm:mainthm} for the modular surface in two steps. First, we bound the number of tetrahedra in pieces of type $A,C$ and $D$ by $n_\gamma$. This
is sufficient for our purposes since, by Lemma \ref{lem:length}, upper bounds linear
in $n_\gamma$ are also linear in $\abs{\gamma}$. Unfortunately, our triangulation of pieces of type $B$ will require a  number of tetrahedra that is quadratic in $n_\gamma$. Accordingly, our second step  uses the Lobachevsky function to show that we still obtain a linear bound in $|\gamma|$.

\begin{prop}\label{prop:A-volume}
The volume of a piece of type $A$, $C$ or $D$ is at most linear in $n_\gamma$ and, thus, is at most linear in $|\gamma|$.
\end{prop}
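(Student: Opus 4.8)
The plan is to bound the number of tetrahedra in each of the pieces of type $A$, $C$ and $D$ separately, and to observe that in every case this count is $O(n_\gamma)$ (with the implied constant absolute), which by the uniform bound on the volume of a geodesic hyperbolic tetrahedron and by Lemma~\ref{lem:length} gives the claimed bound linear in $|\gamma|$.

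First I would treat a piece of type $D$. The piece $D$ is the remainder of the trefoil complement in $\mathbb{S}^3$ after the pieces $A_i$, $B_i$, $C_i$ have been removed; its faces are the exterior faces of the pieces glued to it. None of the new vertices introduced on $pp'$, $qq'$ or inside the template ears lie on $\partial D$ except along a bounded number of arcs, so after the pre-triangulation the boundary of $D$ is a fixed simplicial surface with a number of triangles that does \emph{not} grow with $\gamma$ at all: the intersections of $\gamma$ with the cutting segments $pp'$ and $qq'$ are all absorbed into the $A$, $B$ and $C$ pieces, and the faces of the $A$ and $C$ pieces that touch $D$ are coned off to the fixed points $o_i$, $o_i'$. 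Hence $D$ can be triangulated with $O(1)$ tetrahedra, so $\Vol(D) = O(1)$, which is certainly $O(n_\gamma)$.

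Next, for a piece of type $A$, say $A_1$: it is the cone with apex $o_1$ over the upper side of the $x$ ear, minus the quadrilateral $cbp'p$. Its boundary faces are (a) the triangulated face $pp'o_1$ along which it meets $C_1$, and (b) the polygons induced on the $x$ ear by the $\gamma$-segments, each coned to $o_1$. The number of $\gamma$-segments running through the $x$ ear is exactly the number of occurrences of the letter $x$ in $w_\gamma$ that are immediately followed by a $y$ --- equivalently, the number of cyclic subwords $xy$ --- which is precisely $n_\gamma$ (the bigon faces between consecutive $x$-segments collapse and contribute nothing extra). By Figure~\ref{fig:band} each polygonal face on the ear is subdivided into a bounded number of triangles, and the face $pp'o_1$ receives $O(n_\gamma)$ triangles since $\gamma$ meets $pp'$ in $O(n_\gamma)$ points. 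Coning the whole boundary to $o_1$ then gives a triangulation of $A_1$ with $O(n_\gamma)$ tetrahedra; the same applies to $A_2$ with the roles of $x$ and $y$ swapped, using that the number of $yx$ subwords also equals $n_\gamma$.

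Finally, each piece of type $C$ is a single ideal tetrahedron $pp'o_io_j$ (or $qq'o_io_j$); its exterior faces are triangulated by coning the ideal vertices to $o_1,o_2$ (resp.\ $o_1',o_2'$), and the vertices along $pp'$ or $qq'$ that subdivide its edges number $O(n_\gamma)$, so $C$ is triangulated by $O(n_\gamma)$ tetrahedra. Summing, pieces of types $A$, $C$ and $D$ together contain $O(n_\gamma)$ tetrahedra, so by the universal volume bound $V_0$ for a hyperbolic tetrahedron their total volume is at most a constant times $n_\gamma$; applying the inequality $L \geq (\log 2) n_\gamma$ from Lemma~\ref{lem:length} (summed over the components of $\gamma$) converts this into a bound linear in $|\gamma|$. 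The main subtlety --- the only step that requires care rather than bookkeeping --- is confirming that the number of $\gamma$-segments passing through a given template ear is genuinely $n_\gamma$ rather than the full word length of $w_\gamma$: this is where the collapse of bigon faces in the pre-triangulation (Section~\ref{pre-triangulation}, Figure~\ref{fig:band}) is essential, since it is precisely the mechanism that replaces long runs $x^{k}$ by a single edge and thereby keeps the count of genuine polygonal faces equal to the number of $xy$ transitions. The pieces of type $B$, whose triangulation is quadratic in $n_\gamma$, are deliberately excluded here and handled by the Lobachevsky-function argument in the sequel.
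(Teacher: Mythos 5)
Your overall strategy is the paper's: triangulate each piece, count tetrahedra, invoke the uniform volume bound for hyperbolic tetrahedra, and convert $n_\gamma$ to $|\gamma|$ via Lemma~\ref{lem:length}. Your treatment of the $C$ and $D$ pieces matches the paper's. The gap is in the count for the type $A$ pieces, where you assert (a) that the number of $\gamma$-segments running through the $x$ ear equals the number of cyclic subwords $xy$, hence $n_\gamma$, and (b) that each polygonal face induced on the ear is subdivided into a bounded number of triangles. Claim (a) is false as stated: every occurrence of the letter $x$ in $w_\gamma$ contributes a segment through the $x$ ear, so the number of such segments is $\sum_i k_i$, which can vastly exceed $n_\gamma$ (for $x^n y^m$ there are $n$ segments in the $x$ ear but $n_\gamma = 1$); the bigon collapse only controls the number of \emph{non-bigon} faces, not the number of segments. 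Claim (b) is unjustified and false in general: a face can be an $(n+2)$-gon for arbitrarily large $n$ (the paper explicitly allows this in Section~\ref{pre-triangulation}), and it is triangulated by $n$ triangles. With (b) false, ``boundedly many triangles per face'' times ``$O(n_\gamma)$ faces'' does not yield the bound.

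The correct accounting, which is the paper's, is global rather than face-by-face. A face between two adjacent $\gamma$-arcs of the $x$ ear fails to be a bigon exactly when some number $n\geq 1$ of arcs entering from the $y$ ear land on the branch line between the endpoints of those two arcs; that face is then an $(n+2)$-gon, cut into $n$ triangles and coned to $o_1$ by $n$ tetrahedra. Each arc entering from the $y$ ear is charged at most once in this way, and there are exactly $n_\gamma$ such arcs (one per $yx$ transition), so the \emph{total} number of triangles over all faces of the $x$ ear is at most $n_\gamma$, even though an individual face may require arbitrarily many. Once this charging argument replaces claims (a) and (b), the rest of your writeup goes through and agrees with the paper.
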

\begin{proof}

To begin, we complete our pre-triangulations of these pieces to triangulations of their interior. 
Since the piece of type $D$ is a fixed polyhedron which doesn't intersect $\gamma$,  we can endow it with an arbitrary fixed triangulation.
To complete the pre-triangulation  of pieces of type $C$,   
we add two edges joining each  intersection of $\gamma$ along $pp'$ (respectively  $qq'$) to the other two vertices. 
 This allows us to triangulate them 
 using $n_\gamma+1$ tetrahedra.  The volume
contributed by the $C$ and $D$ pieces is therefore at most linear in $\abs{\gamma}$.

Unlike these first two cases, pieces of type $A$ require a more careful count.
Recall that a bigon in our pre-triangulation corresponded to a rectangle in the $x$ (respectively $y$) ear of the template, bounded by two adjacent $\gamma$ arcs and two ``parallel'' edges (see Figure \ref{fig:band}). These bigons were eliminated by collapsing the parallel edges  along the $\gamma$ arcs in the direction of the template's semi-flow, resulting in a single edge along $pp'$ (respectively $qq'$).  If this edge happened to bound a further bigon, it would once again be collapsed along adjacent $\gamma$ arcs in the direction of the semi-flow. This process continued until the two $\gamma$ arcs emanating from the collapsed edge ceased to be adjacent, i.e., their next intersection with the branch line occurred at two points separated by further intersection with $\gamma$ coming from arcs in the $y$ (respectively $x$) ear (see
Figure \ref{Ghys's template}).
If $n$ arcs coming from the $y$ (respectively $x$) ear fall in between the endpoints of two adjacent $\gamma$ arcs, we have an $n+2$-gon in the template 
which was triangulated using $n$ triangles. We can then cone them off to $o_1$ (respectively $o_1'$) using 
$n$ tetrahedra. Since there are a total of  $n_\gamma$ arcs coming in from the $y$ (respectively $x$) ear,
the number of tetrahedra used to complete our triangulation of a piece of type $A$  is at most linear in $n_\gamma$. This completes the proof of the Proposition.
\end{proof}

For our next gain, let 
$$\Lambda(\theta) = -\int_0^\theta \log\abs{2\sin u}\mathrm{d}u.$$
be the Lobachevsky function.

\begin{lem} If $T$ is a tetrahedron with dihedral angle $\alpha$
along the edge $e$,  then
$$\Vol(T) \leq 2\Lambda\left(\frac{\alpha}{2}\right)\,.$$
\end{lem}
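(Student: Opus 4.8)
The plan is to use the fact that any finite-volume hyperbolic tetrahedron is combinatorially an ideal tetrahedron with a well-known volume formula, and then to analyze that formula as a function of the dihedral angle along a fixed edge. Recall that the volume of an ideal hyperbolic tetrahedron with dihedral angles $\alpha, \beta, \gamma$ (where $\alpha + \beta + \gamma = \pi$, opposite edges carrying equal angles) is
$$\Vol(T) = \Lambda(\alpha) + \Lambda(\beta) + \Lambda(\gamma).$$
For a general (non-ideal) geodesic tetrahedron one can either pass to the ideal case by a standard straightening/comparison argument, or invoke the fact that among all tetrahedra the supremum of the volume is attained in the ideal regular case; in any event it suffices to bound $\Lambda(\alpha) + \Lambda(\beta) + \Lambda(\gamma)$ subject to $\alpha+\beta+\gamma = \pi$ and $\alpha$ fixed.

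First I would fix $\alpha$ and maximize $f(\beta,\gamma) = \Lambda(\beta) + \Lambda(\gamma)$ over the segment $\beta + \gamma = \pi - \alpha$, $\beta,\gamma \geq 0$. Since $\Lambda$ is smooth on $(0,\pi)$ with $\Lambda'(\theta) = -\log|2\sin\theta|$ and $\Lambda''(\theta) = -\cot\theta$, the function $\Lambda$ is concave on $(0,\pi/2]$ and convex on $[\pi/2,\pi)$; a direct computation (or the classical fact that the ideal tetrahedron of maximal volume with one prescribed angle is isoceles in the other two) shows the maximum of $f$ on this segment occurs at $\beta = \gamma = (\pi-\alpha)/2$. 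Hence
$$\Vol(T) \leq \Lambda(\alpha) + 2\Lambda\!\left(\tfrac{\pi - \alpha}{2}\right).$$
It then remains to verify the pointwise inequality $\Lambda(\alpha) + 2\Lambda\!\left(\tfrac{\pi-\alpha}{2}\right) \leq 2\Lambda\!\left(\tfrac{\alpha}{2}\right)$ for all $\alpha \in (0,\pi)$.

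For that last inequality I would use the duplication formula for the Lobachevsky function, $\Lambda(2\theta) = 2\Lambda(\theta) + 2\Lambda\!\left(\theta + \tfrac{\pi}{2}\right)$, equivalently $\tfrac12\Lambda(2\theta) = \Lambda(\theta) - \Lambda\!\left(\tfrac{\pi}{2} - \theta\right)$ after using oddness and $\pi$-antiperiodicity of $\Lambda$. Writing $\theta = \alpha/2$ this rearranges the claimed bound into a statement about a single evaluation, or reduces it to the monotonicity/sign of an explicit elementary combination of $\Lambda$ values that can be checked by differentiating once and locating the unique critical point. I expect the main obstacle to be precisely this bookkeeping with the Lobachevsky identities — getting the duplication formula applied with the correct shifts and signs so that the three-term expression collapses cleanly — rather than anything conceptually deep; the optimization over $\beta,\gamma$ is routine concavity, and the reduction from a general tetrahedron to the ideal volume formula is standard.
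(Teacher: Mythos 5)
Your proposal follows the paper's proof essentially step for step: reduce to an ideal tetrahedron, apply the Lobachevsky--Milnor volume formula, use concavity of $\Lambda(\alpha)+\Lambda(\beta)+\Lambda(\gamma)$ on the simplex $\alpha+\beta+\gamma=\pi$ to bound the volume by $\Lambda(\alpha)+2\Lambda\left(\frac{\pi-\alpha}{2}\right)$, and finish with the duplication identity. The only point worth flagging is that your final ``pointwise inequality'' is in fact an identity --- the duplication formula gives $\Lambda(\alpha)+2\Lambda\left(\frac{\pi-\alpha}{2}\right)=2\Lambda\left(\frac{\alpha}{2}\right)$ exactly --- so no further sign analysis or critical-point hunting is needed.
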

\begin{proof}
We may assume $T$ is an ideal tetrahedron, and let $\beta,\gamma$ be the
dihedral angles at the two other edges incident to a vertex at the end of $e$.
As recounted in \cite[Lemma 2]{Milnor:HyperbolicGeometry}, it was shown
by Lobachevsky that $\alpha + \beta + \gamma = \pi$ and that
$$\Vol(T) = \Lambda(\alpha) + \Lambda(\beta) + \Lambda(\gamma)\,.$$

It is verified in \cite[Prop.\ 6.6]{Gueritaud:CanonicalTriangulations}
that the expression on the right is a concave function on the simplex
$\alpha+\beta+\gamma=\pi$.  In particular, 
$$\Vol(T) \leq \Lambda(\alpha) + 2\Lambda\left(\frac{\pi-\alpha}{2}\right)\,.$$

By the identity (c.f. \cite[Lemma 1]{Milnor:HyperbolicGeometry})
$$\Lambda(2\theta) =
   2\Lambda(\theta) + 2\Lambda\left( \frac{\pi}{2} + \theta \right) = 
   2\Lambda(\theta) - 2\Lambda\left( \frac{\pi}{2} - \theta \right)\,,$$
we see that
$2\Lambda\left(\frac{\pi}{2}-\frac{\alpha}{2}\right) =
 2\Lambda\left(\frac{\alpha}{2}\right) - \Lambda(\alpha)$
and the claim follows.
\end{proof}

\begin{prop}\label{prop:log}
Let $\{T_i\}_{i=1}^s$ be a family of $s\geq 2$ tetrahedra in $\mathbb{H}^3$,
all sharing the same edge $e$.  Then
$$\sum_i\Vol(T_i) \leq C\log(s)$$
where $C$ is a universal constant.
\end{prop}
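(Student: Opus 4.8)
The plan is to bound each $\Vol(T_i)$ using the previous lemma and then control the resulting sum. Since all $s$ tetrahedra share the edge $e$, their dihedral angles $\alpha_i$ at $e$ sum to at most $2\pi$ (the tetrahedra fit around $e$ inside $\mathbb{H}^3$ without overlapping, so the total dihedral angle around $e$ is at most a full turn). By the lemma, $\Vol(T_i) \leq 2\Lambda(\alpha_i/2)$, so it suffices to prove that
\begin{equation*}
\sum_{i=1}^s 2\Lambda\!\left(\frac{\alpha_i}{2}\right) \leq C\log(s)
\end{equation*}
whenever $\alpha_i \geq 0$ and $\sum_i \alpha_i \leq 2\pi$.

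First I would record the two facts about $\Lambda$ that do the work: $\Lambda$ is bounded (say $|\Lambda| \leq \Lambda(\pi/6) =: M$, the maximum), and near $0$ it behaves like $\Lambda(\theta) \sim -\theta\log\theta$; more precisely there is a constant $c$ with $0 \leq \Lambda(\theta) \leq c\,\theta\,(1+\log(1/\theta))$ for $\theta \in (0,\pi]$ (this follows from $\Lambda'(\theta) = -\log|2\sin\theta|$ and integrating the bound $|\log|2\sin u|| \leq \log(1/u) + O(1)$ near $0$). Setting $\theta_i = \alpha_i/2$, we then have $\sum_i \theta_i \leq \pi$ and want $\sum_i \Lambda(\theta_i) \leq C'\log s$. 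Split the index set into $S_{\mathrm{big}} = \{i : \theta_i \geq 1/s\}$ and $S_{\mathrm{small}} = \{i : \theta_i < 1/s\}$. On $S_{\mathrm{big}}$ there are at most $s$ indices and each contributes at most $M$, but that only gives $Ms$, which is too weak — so instead I use concavity: the function $t \mapsto \Lambda(t)$ on a simplex is handled as in the proof of the previous lemma, but here the cleaner route is Jensen. Actually the right move is: on $S_{\mathrm{big}}$, use $\Lambda(\theta_i) \leq c\,\theta_i(1+\log(1/\theta_i)) \leq c\,\theta_i(1+\log s)$, so $\sum_{S_{\mathrm{big}}} \Lambda(\theta_i) \leq c(1+\log s)\sum_i \theta_i \leq c\pi(1+\log s)$. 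On $S_{\mathrm{small}}$, the function $\theta(1+\log(1/\theta))$ is increasing for small $\theta$, so each term is at most $\frac{1}{s}(1+\log s)$ and there are at most $s$ of them, contributing at most $1+\log s$. Summing the two pieces gives the bound $C\log s$ for a universal $C$ (absorbing the additive constants into the log by using $s\geq 2$).

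The one point requiring care — and the main obstacle — is the claim $\sum_i \alpha_i \leq 2\pi$, i.e. that the tetrahedra genuinely sit around $e$ without overlapping so that their dihedral angles add up to at most a full angle. In the intended application the $T_i$ come from a triangulation, so they tile a neighborhood of $e$ and the sum is \emph{exactly} $2\pi$ (or $\pi$ if $e$ lies in the boundary); in the abstract statement one must either take this as a hypothesis implicit in "sharing the same edge $e$" in a geometric (non-overlapping) sense, or else weaken to $\sum_i \alpha_i \leq 2\pi$ as the operative assumption. I would state this explicitly. The rest is the elementary estimate on $\Lambda$ sketched above; the key inequality $\Lambda(\theta) \leq c\,\theta(1 + \log(1/\theta))$ is the technical heart and follows from $\Lambda'(\theta) = -\log|2\sin\theta|$ together with $|2\sin u| \geq c' u$ on $(0,\pi/2]$.
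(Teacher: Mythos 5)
Your proof is correct, but it takes a genuinely different route from the paper's. Both arguments start identically: the dihedral angles at $e$ satisfy $\sum_i\alpha_i\leq 2\pi$ (the paper makes exactly the same implicit non-overlapping assumption you flag), and the preceding lemma reduces everything to bounding $\sum_i\Lambda(\alpha_i/2)$. From there the paper uses concavity of $\Lambda$ on $[0,\pi/2]$ (via $\Lambda''=-\cot$) and Jensen's inequality to collapse the sum to $2s\,\Lambda\left(\pi/s\right)$, handling $s\leq 5$ separately by the uniform volume bound since the monotonicity step needs $\pi/s\leq\pi/6$; it then extracts the $\log s$ from the series expansion of $\Lambda$. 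You instead prove the pointwise estimate $\Lambda(\theta)\leq\theta\left(1+\log(1/\theta)\right)$ by integrating $-\log\abs{2\sin u}\leq\log(1/u)$, and split the angles at the threshold $1/s$: the large angles contribute at most $\pi(1+\log s)$ because their total is bounded, and the small ones contribute at most $1+\log s$ because each is at most $\frac{1}{s}(1+\log s)$ by monotonicity of $\theta(1+\log(1/\theta))$ near $0$. Your version avoids Jensen, the series expansion, and the case split on small $s$, at the cost of proving the elementary integral bound; the paper's version is shorter once one accepts the concavity fact (which it needs anyway for the lemma). One small caveat: your inequality $\Lambda(\theta)\leq c\,\theta(1+\log(1/\theta))$ as stated for all $\theta\in(0,\pi]$ actually fails near $\theta=\pi$ (where the right side is negative but $\Lambda(\pi)=0$); this is harmless because in the application $\theta_i=\alpha_i/2<\pi/2$, but you should restrict the stated range accordingly.
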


\begin{proof}
For $s\leq 5$ the claim follows from the uniform bound on the volumes of
tetrahedra, so from now on we assume $s\geq 6$.
Let $\alpha_i$ be the dihedral angle of the tetrahedron $T_i$ at the edge $e$,
so that the total angle $\sum_i\alpha_i\leq 2\pi$.  By the Lemma, we have
$$\sum_i \Vol(T_i) \leq 2\sum_i \Lambda\left(\frac{\alpha_i}{2}\right).$$

Since $\Lambda''(\theta) = -\cot(\theta)$ is non-positive on $[0,\pi/2]$
the Lobachevsky function is concave there and it follows that
$$\sum_i \Vol(T_i) \leq 2s \Lambda\left(\frac{\sum_i \alpha_i}{2s}\right)
                   \leq 2s \Lambda\left(\frac{\pi}{s}\right),$$
where the second inequality follows from the monotonicity of $\Lambda$
in $[0,\pi/6]$ and the fact that $\sum_i \alpha_i/2s \leq \pi/6$
(here we used the assumption $s\geq 6$).

Finally, by the absolutely convergent series expansion
\bea
\Lambda(\theta)=\theta\left(1-\log\abs{2\theta}+\sum_{k=1}^\infty \frac{B_k}{2k}\frac{(2\theta)^{2k}}{(2k+1)!}\right)
\eea
(c.f. \cite[p.\ 18]{Milnor:HyperbolicGeometry}) we have
\bea
x\Lambda(\frac{\pi}{x}) \leq c + \pi\log(x) \leq C\log(s)
\eea
for some constants $c,C>0$.
\end{proof}
We are now ready to prove:
\begin{thm}[Theorem~\ref{thm:mainthm} for the modular surface]
\label{thm:firstbound}
There is a constant $C>0$ such that, for any finite collection $\gamma$
of periodic geodesics on $\MS$, we have
$$\Vol(T^1\MS\setminus\gamma) \leq C\,\abs{\gamma}.$$
\end{thm}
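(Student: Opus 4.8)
The plan is to assemble the pieces already built in Sections~\ref{sec:coding}--\ref{subsec:piece-volume-bounds} and reduce everything to two facts: the decomposition formula~\eqref{decomposition equation}, and the two-sided control on $n_\gamma$ versus $\abs{\gamma}$ coming from Lemma~\ref{lem:length}. First I would recall that, by the cutting and coning construction, $T^1\MS\setminus\gamma$ is covered by the two $A$-pieces, two $B$-pieces, four $C$-pieces and one $D$-piece, so that
\begin{equation*}
\Vol(T^1\MS\setminus\gamma)\leq 2\Vol(A)+2\Vol(B)+4\Vol(C)+\Vol(D),
\end{equation*}
where each $\Vol(\cdot)$ is an upper bound for the volume of a single piece of that type. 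By Proposition~\ref{prop:A-volume}, the contributions of the $A$, $C$ and $D$ pieces are each at most linear in $n_\gamma$, hence at most linear in $\abs{\gamma}$ by the bound $L\geq(\log 2)n_\gamma$ of Lemma~\ref{lem:length}. So the whole theorem comes down to bounding $\Vol(B)$ by a constant times $\abs{\gamma}$.

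Next I would analyze a $B$-piece, say $B_1$. Its pre-triangulation, completed to a triangulation of the interior, uses a number of tetrahedra that is quadratic in $n_\gamma$: there are on the order of $n_\gamma$ intersection points of $\gamma$ along $pp'$ and on the order of $n_\gamma$ along $qq'$, and the prism structure (the product of the $y$-ear with the segment $cp$) forces us to cone these two families against each other. The crucial geometric observation, which must be extracted carefully from the triangulation, is that all of these tetrahedra share a common edge — concretely the segment $pp'$ (or $qq'$), or the coning edge to $o_2$ — so that the hypotheses of Proposition~\ref{prop:log} are satisfied. Once that is in place, Proposition~\ref{prop:log} gives
\begin{equation*}
\Vol(B_1)\leq C\log(s)
\end{equation*}
where $s$ is the number of tetrahedra in $B_1$, and since $s$ is at most quadratic in $n_\gamma$ we get $\Vol(B_1)\leq C'\log(n_\gamma)\leq C''\,n_\gamma$, which is again linear in $\abs{\gamma}$ by Lemma~\ref{lem:length}.

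There is one subtlety I would address explicitly: the tetrahedra in a $B$-piece need not literally share a single edge if the triangulation is not chosen with care, so before invoking Proposition~\ref{prop:log} I would partition the tetrahedra of $B_1$ into a bounded number of sub-families, each consisting of tetrahedra through a common edge (for instance, one family for the tetrahedra incident to $pp'$, one for those incident to $qq'$, one for the $pq$-diagonal block, and so on), apply Proposition~\ref{prop:log} to each family, and sum; since the number of families is bounded independently of $\gamma$, the total is still $O(\log(n_\gamma))$. Combining the $A$, $B$, $C$, $D$ estimates in~\eqref{decomposition equation} then yields $\Vol(T^1\MS\setminus\gamma)\leq C\,n_\gamma$, and a final application of $n_\gamma\leq\abs{\gamma}/\log 2$ gives the desired bound $\Vol(T^1\MS\setminus\gamma)\leq C\,\abs{\gamma}$. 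The main obstacle is the bookkeeping of the previous paragraph — verifying that the quadratically many tetrahedra in a $B$-piece really do organize into boundedly many ``fans'' around common edges, so that the logarithmic savings of Proposition~\ref{prop:log} applies — rather than any new analytic input.
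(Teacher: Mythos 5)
Your overall architecture (the decomposition \eqref{decomposition equation}, Proposition~\ref{prop:A-volume} for the $A$, $C$, $D$ pieces, Proposition~\ref{prop:log} for the $B$ pieces, Lemma~\ref{lem:length} to pass to geometric length) matches the paper's, but the way you handle the $B$ piece has a genuine gap. The tetrahedra of $B_1$ do \emph{not} organize into a bounded number of fans around common edges. The fans are indexed by the strands of $\gamma$ entering the $y$ ear from the $x$ ear: a strand corresponding to a syllable $y^{m_i}$ winds $m_i$ times around the ear and is flanked, on each side, by a chain of up to $m_i$ rectangles whose branch-line edges are all identified to a \emph{single} edge by the bigon collapse of Section~\ref{pre-triangulation}; the tetrahedra above one such chain form one fan around that one identified edge. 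Different strands produce different identified edges, so there are $\Theta(n_\gamma)$ fans, not $O(1)$; in particular these tetrahedra are not incident to $pp'$, $qq'$ or the $pq$ diagonal (each is the join of a collapsed branch-line edge with an edge $[x_i,x_j]$), so your proposed partition into families attached to those few edges does not cover them. Your auxiliary claim that the number $s$ of tetrahedra in $B_1$ is at most quadratic in $n_\gamma$ is also false: for the word $xy^m$ one has $n_\gamma=1$ but on the order of $m$ tetrahedra.

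Consequently Proposition~\ref{prop:log} cannot be applied once to the whole piece; it must be applied once per fan, yielding a bound of the shape $C\bigl(n_\gamma+\sum_{i=1}^{n_\gamma}\log m_i\bigr)$ for $\Vol(B_1)$, where the $n_\gamma$ term accounts for the splitting polyhedra and the top coning region. The quantity $\sum_i\log m_i$ is not controlled by $n_\gamma$ alone, so the second inequality of Lemma~\ref{lem:length} --- the only one you invoke --- does not suffice; you need the first inequality, $L\geq\sum_i\left(\log k_i+\log m_i\right)$, which is in the paper precisely for this step. With the fan count corrected to $\Theta(n_\gamma)$ and that inequality invoked, your argument becomes the paper's.
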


\begin{proof}
In light of Equation \ref{decomposition equation} and
Proposition \ref{prop:A-volume} it remains to consider the contribution
of the pieces of type $B$ and, by symmetry, it suffices to consider 
piece $B_1$.

Before completing our pre-triangulation of this piece into a triangulation, we introduce some notation and describe additional vertices within the piece's interior.  Choose a line $l$ parallel to the
segment $pp'$ and slightly below it within the $pp'bc$ rectangle. There are $n_\gamma$ intersections of $\gamma$ with $l$, they correspond to the strands arriving from the $x$ ear into the $y$ ear. We label them from right to left as $p_1^l,p_2^l,\dots,p_{n_\gamma}^l$ and add a vertex $x_i$ between $p_i^l$ and $p_{i+1}^l$. 
 Next, we denote the intersection point of the strand starting at $p_i^l$ with the branch-line by $p_i$ and note that these two points correspond to the same point at infinity in our $3$-manifold $T^1\MS\setminus\gamma$. 
Lastly, we denote by $[p_i,p_{i+1}]$ (respectively $(p_i,p_{i+1})$) the closed (respectively open) subinterval of the branch-line between $p_i$ and $p_{i+1}$.

We now describe the edges of our triangulation.  First, add  any arc in the complement of the $x_i$'s and $p_i^l$'s within $l$ to our set of edges.
Second, whenever there is a $\gamma$ segment starting in $[p_i,p_{i+1}]$
and reaching the interval $(p_j,p_{j+1})$ after winding around the $y$ ear, we add an edge connecting $x_i$ to $x_j$, running roughly
above this $\gamma$ segment parallel to the template and within the $B$ piece.
If there is a segment of $\gamma$ emanating from $[p_i,p_{i+1}]$
and passing through $qq'$ before returning to the branch-line, we connect $x_i$ to
$o_2$ by a ``diagonal" edge above the previously added edges  within the $B$ piece. Note that some of  these edges may correspond to many $\gamma$ segments in the $y$ ear of
the template (the bottom face of the $B$ piece) and that some of these
edges may connect a point $x_i$ to itself. 
Finally, we join $x_i$ to all $\gamma$ intersections within 
$[p_i,p_{i+1}]$ by an edge and connect all $\gamma$ intersections in $[c,p_{n_\gamma}]$ to
$o_2$ by an edge.  
We are now ready to formalize the remainder of our triangulation.

 Consider adjacent intersection points of $\gamma$ with an interval $[p_{i},p_{i+1}]$. If the corresponding strands of $\gamma$ split into different segments
$[p_j,p_{j+1}]$ and $[p_k,p_{k+1}]$ after winding around the $y$ ear, there is a $(k-j)$-gon on the template band
which we have already triangulated (see Section \ref{pre-triangulation}).
 We also have a $2(k-j)+1\,$-gon
in the plane cutting through the $x_i$'s above the template band and vertical edges connecting the two polygons as illustrated in Figure~\ref{SplittingTriangulations}.
This polyhedron is easily  triangulated using a number of tetrahedra that is linear in $k-j$.
Since there are $n_\gamma$ intervals $[p_i, p_{i+1}]$, the number of tetrahedra resulting from all such splittings is linear in $n_\gamma$.

\begin{figure}[ht]
\begin{subfigure}{5cm}
                \includegraphics[width=5cm]{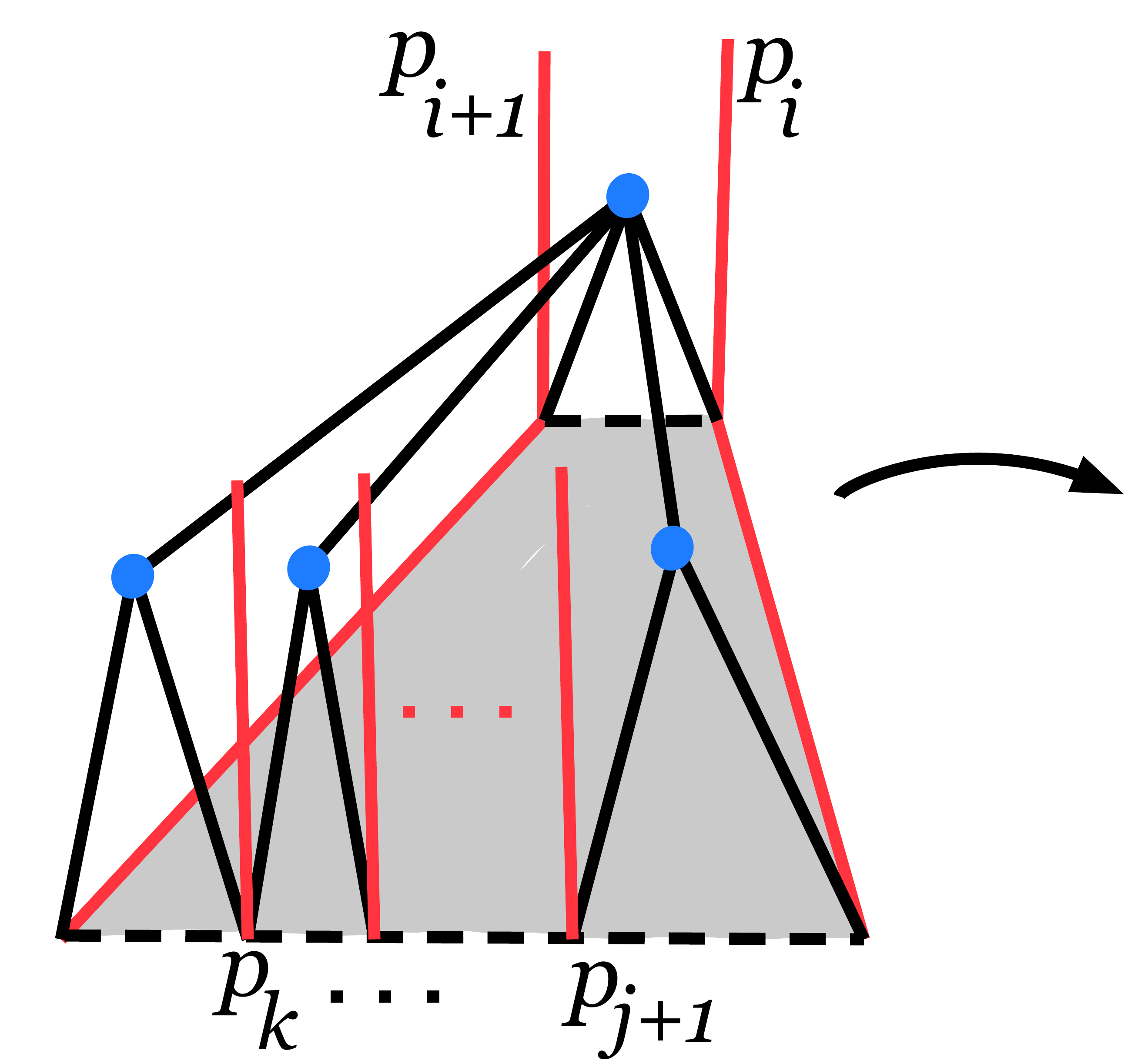}
        \end{subfigure}%
    \begin{subfigure}{5cm}
\includegraphics[width=5cm]{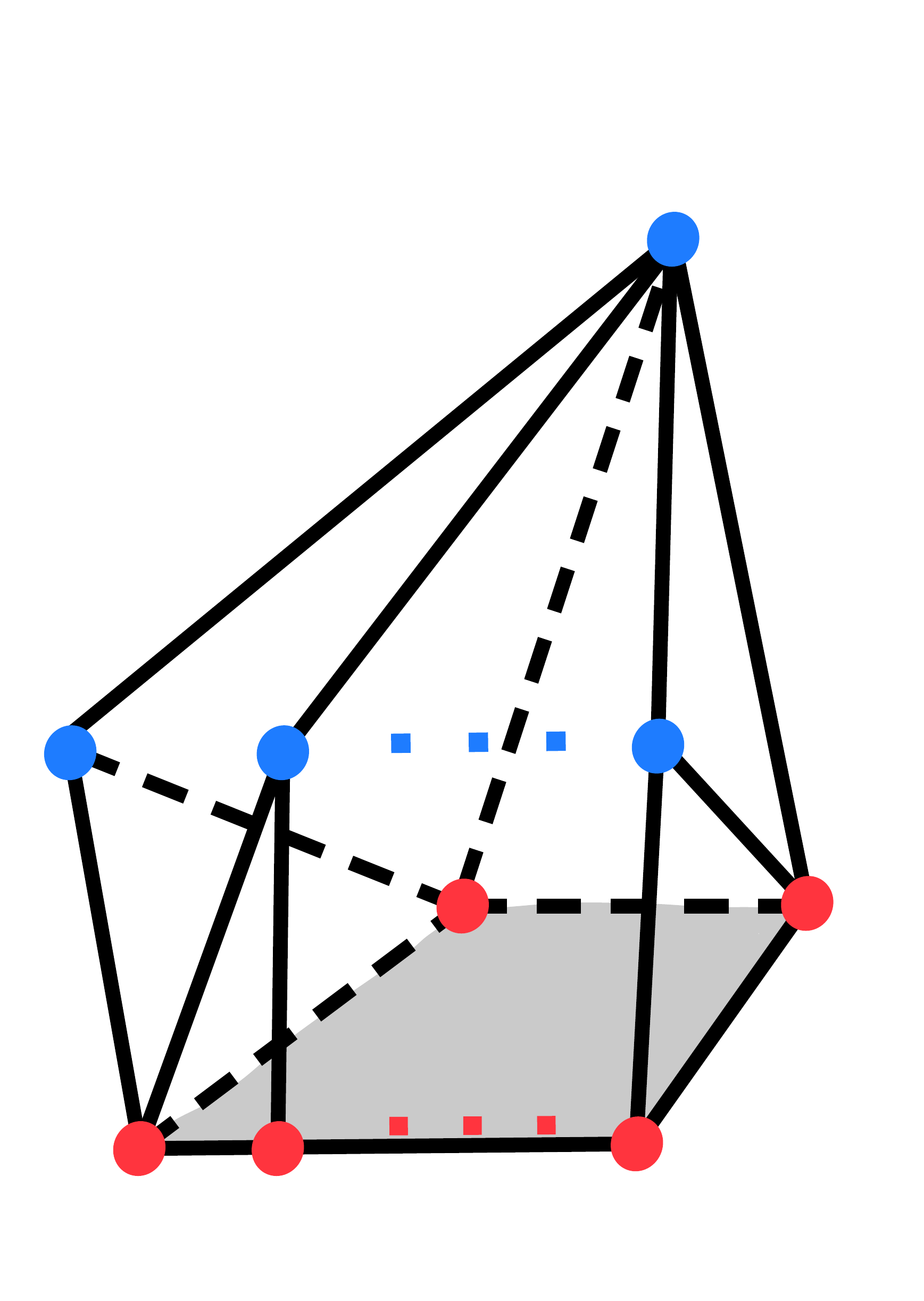}
 \end{subfigure}
\caption{The figure on the left shows the polyhedron between two segments of $\gamma$ that split before returning to the branch-line. The grey portion of the polyhedron is the previously triangulated polygon lying within the template's ear. The figure on the right
 shows the same polyhedron after  taking $\gamma$ to infinity.}\label{SplittingTriangulations}
\end{figure}

On the other hand, if the corresponding  strands of $\gamma$ continue to two adjacent intersection points in some
interval $[p_j,p_{j+1}]$ where $j\geq i$, we already have a tetrahedron
in the part of the $B$ piece above them: the convex hull of the edge
between them (this edge is unique -- both intersections with the
branch-line have been identified) and the edge $[x_i,x_j]$.
As previously noted, there are too many such tetrahedra to directly yield an adequate  volume bound.

A strand of $\gamma$ passing through $p_i$ upon entering  the $B$ piece from the $x$ ear corresponds to a power of $y$ in the coding of $\gamma$. This power determines how many times the strand will wind around the $y$ ear before exiting the $B$ piece. If we denote this power of $y$ by $m_{j_i}$, this strand will wind around the $y$ ear as a sequence
of $m_{j_i}$ segments of $\gamma$ separated by intersections with the branch-line. 
 We denote these $\gamma$ segments  by $\gamma_1,\dots,\gamma_{j_i}$ and observe that  
each of them is adjacent to at most two rectangles within the template. Indeed, a segment is  adjacent to a rectangle whenever it does
not split from the $\gamma$ arc crossing the branch-line next to it on the left or the right before returning to the branch-line. In such a situation, starting at $p_i$ and on either side of the $\gamma$ segments, there may be a chain of consecutive rectangles separated by the branch-line and adjacent to the segments $\gamma_1$, $\gamma_2$,$\ldots \gamma_{j_k}$.
This chain terminates just before  a splitting of  segment $\gamma_{j_k}$ and its neighbouring $\gamma$ arc if there is one, or at the intersection with $qq'$.
Thus, on each side, we have a chain of
at most $m_{j_i}$ rectangles and above each rectangle we have a tetrahedron
(as indicated in the previous paragraph).  Recalling that the branch-line edges of these rectangles are all identified as a single edge (see Section \ref{pre-triangulation}), the rectangles in the chain all
have this edge in common. Therefore, by Proposition~\ref{prop:log}, the tetrahedra belonging to this chain 
contribute at most $C\log(m_{j_i})$ to the volume of $T^1\MS\setminus\gamma$. 

Note that once $\gamma_{j_k}$ does split from a neighbouring $\gamma$ arc, then the region above the splitting was already triangulated (see Figure \ref{SplittingTriangulations}). Moreover, the regions in the template that start on the branch-line between the endpoints of the splitting segments were also previously taken care of as they are adjacent to $\gamma$ segments that start at points $p_j,\dots,p_k$ that lie between these endpoints. As such, it only remains to triangulate the region above the plane section parallel to the $y$ ear containing the $x_i$'s which are not connected to $o_2$ by an edge. 

Observe that this plane section is divided into a number of  triangles and squares that is linear in $n_\gamma$. Upon subdividing the squares into triangles, we obtain a planar region subdivided into a number of  triangles that is also linear in $n_\gamma$. We can then triangulate the upper region 
 by coning these triangles to $o_2$. 
To conclude, the total volume of the $B$ piece is thus given by
$$ A(n_{\gamma}+\sum_{i=1}^{n_\gamma} \log(k_i))$$
for an appropriate constant $A>0$. This bound  is at most linear in
$\abs{\gamma}$ by Lemma~\ref{lem:length}.
\end{proof}

\section{An upper bound for arbitrary hyperbolic surfaces}\label{sec:upperbound}
Having established Theorem \ref{thm:mainthm} for the modular surface, we now obtain its general form in three stages.  First, we show that the claim is topological, in that
it is independent of the choice of hyperbolic structure on a surface:
\begin{lem} Let $g$ and $h$ be two hyperbolic structures on the surface $S$.
If Theorem~\ref{thm:mainthm} holds for $(S,g)$, then it also holds for
$(S,h)$.
\end{lem}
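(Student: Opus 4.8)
The statement asserts that the linear volume bound, once known for one hyperbolic structure on $S$, transfers to any other. The underlying reason is that the *volume* of $T^1S \setminus \gamma$ is a topological invariant of the isotopy class of the link (it depends only on the homeomorphism type of the complement), so the inequality $\Vol(T^1 S \setminus \gamma) \le C_{(S,g)} |\gamma|_g$ concerns only how the two length functions $|\cdot|_g$ and $|\cdot|_h$ compare across the (finite set of) geodesics in a given free-homotopy class. The plan is therefore to reduce the whole statement to a comparison of lengths.

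First I would fix the observation that for a finite collection $\gamma$ of free-homotopy classes of closed curves on $S$, the $3$-manifold $T^1 S \setminus \gamma$ — where now $\gamma$ denotes the canonical lift of the geodesic representatives — has a homeomorphism type independent of the chosen hyperbolic metric: the geodesic lift is determined up to isotopy in $T^1 S$ by the free-homotopy classes alone (this is exactly the point made in the introduction that the associated knot "does not depend on the chosen metric"). Hence $\Vol\bigl(T^1(S,h)\setminus\gamma\bigr) = \Vol\bigl(T^1(S,g)\setminus\gamma\bigr)$, and by hypothesis the right-hand side is at most $C_{(S,g)}\,|\gamma|_g$.

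Next I would compare the two length functions. The core fact is that on a closed (or finite-area) hyperbolic surface the geometric length of the $g$-geodesic representative of a class is comparable, up to a multiplicative constant depending only on $g$ and $h$, to its $h$-length; equivalently, the two marked length spectra are bi-Lipschitz equivalent. One clean way to see this without invoking heavy machinery: both $|\cdot|_g$ and $|\cdot|_h$ are, up to bounded error, comparable to the word length in $\pi_1(S)$ with respect to a fixed finite generating set (the orbit map $\pi_1(S) \to \widetilde S$ is a quasi-isometry for either metric, by the Milnor–Švarc lemma), so $|\cdot|_g$ and $|\cdot|_h$ are quasi-isometric to each other; since geodesic lengths of nontrivial classes are bounded below by the systole, the additive constant can be absorbed and one gets a genuine two-sided multiplicative bound $|\gamma|_h \le K\,|\gamma|_g \le K^2 |\gamma|_h$ for a constant $K = K(g,h)$ and every finite collection $\gamma$ (the length of a collection being the sum of the lengths of its members). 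Combining, $\Vol\bigl(T^1(S,h)\setminus\gamma\bigr) \le C_{(S,g)}\,|\gamma|_g \le C_{(S,g)}\,K\,|\gamma|_h$, so $C_{(S,h)} := K\,C_{(S,g)}$ works.

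The main point to be careful about — rather than a genuine obstacle — is the invariance of the homeomorphism type of the complement of the canonical lift under change of metric: one must make sure the geodesic lift in $T^1 S$ for the metric $h$ is ambiently isotopic to the one for $g$, which follows because both are isotopic to the "tangent curve" lift of any smooth representative of the free-homotopy class(es), the isotopy being performed simultaneously on the curve and its tangent direction. Once this is granted, the rest is the standard quasi-isometry comparison of length spectra and is routine. (Alternatively, if one is uneasy about the disjointness of the lifts being metric-independent: when $\gamma$ is filling, Foulon–Hasselblatt guarantees the complement is hyperbolic for the geodesic lift regardless of the metric, and Mostow rigidity then pins down the volume; but the elementary isotopy argument above suffices and avoids this.)
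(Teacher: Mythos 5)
Your reduction of the lemma to two ingredients --- topological invariance of $\Vol(T^1S\setminus\gamma)$ under change of metric, and a uniform two-sided comparison of the marked length spectra of $g$ and $h$ --- is the right structure, and the first ingredient is handled correctly. The gap is in your justification of the second ingredient. You derive the length comparison from the claim that geodesic length is comparable to word length in $\pi_1(S)$ via Milnor--\v{S}varc, but that lemma requires a cocompact action, and the surfaces to which this lemma is actually applied in the paper are finite-area surfaces \emph{with cusps} (covers of the modular surface, punctured tori and spheres), where the action of $\pi_1(S)$ on $\H$ is not cocompact and the orbit map is exponentially distorted in the cusps. Concretely, on the modular surface the closed geodesic coded by $x^n y$ has word length $\asymp n$ but geometric length $\asymp \log n$ (this is exactly the content of Lemma~\ref{lem:length} and Example~\ref{exa:bounded-volume}), so the lower bound $\ell_g(w)\gtrsim \|w\|$ that you need fails, and the two one-sided upper bounds $\ell_g,\ell_h\lesssim\|w\|$ that do survive give no relation between $\ell_g$ and $\ell_h$.

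The conclusion you want --- $\ell_h(\gamma)\le K\,\ell_g(\gamma)$ for all isotopy classes, with $K=K(g,h)$ --- is nevertheless true, but it needs an argument that controls the cusps. The paper's route: since the complement of the cusp neighbourhoods in a Dirichlet domain is compact, one can choose a diffeomorphism $(S,g)\to(S,h)$ that is homotopic to the identity, an isometry near the cusps, and hence globally $K$-bi-Lipschitz; applying it to the $g$-geodesic in a class produces a curve in the same isotopy class of $h$-length at most $K\,\ell_g$, and the $h$-geodesic is no longer than that. (Alternatively one can invoke Thurston's Lipschitz metric on Teichm\"uller space, as the remark following the lemma notes.) If you replace your Milnor--\v{S}varc paragraph with this, the rest of your proof goes through.
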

\begin{proof}
This follows immediately from the fact that the metrics
$g$ and $h$ on $S$ are bi-Lipschitz.
Indeed, any Dirichlet fundamental domain is the union of a set of cusp
neighbourhoods and a compact complement, so we can take any diffeomorphism
which is an isometry near the cusps.  Furthermore, we can take the
diffeomorphism to be homotopic to the identity.  In that case, the
diffeomorphism will preserve the isotopy classes of curves.  Since every
essential closed geodesic is the shortest curve in its isotopy class,
it follows that the ratios of lengths of the geodesics corresponding
to each such isotopy class in the two hyperbolic structures are uniformly
bounded.
\end{proof}
\begin{rem}
The supremum of those ratios is exactly the infimum
of the Lipschitz constants of maps between the structures;
see \cite[Thm.\ 8.5]{thurston1998minimal}.  Control of the ratios also
follows from the bounds in \cite{lenzhen2012bounded}.
	
\end{rem}

Next, we show that the claim passes to covers:

\begin{prop} Let $\hat{S}\to S$ be a degree $d$ regular covering
of hyperbolic Riemann surfaces.  Suppose Theorem~\ref{thm:mainthm} holds
for $S$ with constant $A_S$.  Then it also holds for $\hat{S}$ with the
constant $d\cdot A_S$.
\end{prop}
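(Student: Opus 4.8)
\emph{Proof proposal.} The plan is to push the collection of geodesics down to $S$, apply the bound already known there, and pull it back up through the covering. Write $\pi\colon\hat S\to S$ for the covering, and equip $\hat S$ with the pulled-back metric (harmless, by the preceding Lemma), so that $\pi$ is a local isometry and a degree $d$ covering of Riemannian surfaces. Let $\hat\gamma$ be a finite (filling) collection of primitive periodic geodesics on $\hat S$, and let $\gamma$ be the finite set of primitive periodic geodesics on $S$ whose images, as point sets, make up $\pi(\hat\gamma)$. By construction $\hat\gamma\subseteq\pi^{-1}(\gamma)$, so $\pi^{-1}(\gamma)$ is again a filling collection on $\hat S$; consequently each component of $S\setminus\gamma$ is covered by a disjoint union of disks and once-punctured disks and is therefore itself a disk or once-punctured disk, i.e. $\gamma$ is filling on $S$. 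In particular $T^1 S\setminus\gamma$ is hyperbolic of finite volume and Theorem~\ref{thm:mainthm} applies to it.

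Next I would exploit the covering at the level of unit tangent bundles. The map $\pi$ induces a degree $d$ covering $T^1\hat S\to T^1 S$ which carries closed geodesic-flow orbits to closed geodesic-flow orbits, and the preimage of the lift of $\gamma$ is exactly the lift of $\pi^{-1}(\gamma)$. Hence $T^1\hat S\setminus\widetilde{\pi^{-1}(\gamma)}\to T^1 S\setminus\gamma$ is a degree $d$ cover of finite-volume hyperbolic $3$-manifolds, and volume multiplies:
$$\Vol\!\left(T^1\hat S\setminus\widetilde{\pi^{-1}(\gamma)}\right)=d\cdot\Vol(T^1 S\setminus\gamma)\le d\cdot A_S\,\abs{\gamma}.$$

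It then remains to return from $\pi^{-1}(\gamma)$ to the original collection $\hat\gamma$ and to compare lengths. Since $\hat\gamma\subseteq\pi^{-1}(\gamma)$, the manifold $T^1\hat S\setminus\hat\gamma$ is obtained from $T^1\hat S\setminus\widetilde{\pi^{-1}(\gamma)}$ by Dehn filling (meridionally) the cusps arising from the extra components; by Thurston's hyperbolic Dehn surgery theorem — or, more robustly, by the fact that the Gromov norm is non-increasing under Dehn filling together with $\Vol=v_3\|\cdot\|$ for finite-volume hyperbolic $3$-manifolds — this does not increase volume, so $\Vol(T^1\hat S\setminus\hat\gamma)\le\Vol(T^1\hat S\setminus\widetilde{\pi^{-1}(\gamma)})$. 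For the lengths, each component $\hat\gamma_i$ of $\hat\gamma$ covers the primitive geodesic in $\gamma$ underlying its image, so $\abs{\hat\gamma_i}$ is an integer multiple of the length of that image, in particular at least it; since distinct elements of $\gamma$ have disjoint preimages in $\hat\gamma$, choosing one preimage per element of $\gamma$ gives $\abs{\gamma}\le\abs{\hat\gamma}$. Chaining the three inequalities yields $\Vol(T^1\hat S\setminus\hat\gamma)\le d\cdot A_S\,\abs{\hat\gamma}$, which is the assertion.

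The only step that is not pure bookkeeping is the one where additional geodesics get drilled, i.e. the passage from the link $\hat\gamma$ to the larger link $\pi^{-1}(\gamma)$: one needs that enlarging the drilled link can only increase the volume, which is precisely the statement that hyperbolic Dehn filling does not increase volume (Thurston). Everything else — that $\pi$ lifts to a degree $d$ cover of unit tangent bundles, that filling collections pull back and push forward to filling collections, that volume is multiplicative in finite covers, and the elementary bound $\abs{\gamma}\le\abs{\hat\gamma}$ — is routine, so this is where I would be most careful to cite the correct reference.
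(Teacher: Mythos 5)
Your proposal is correct and follows essentially the same route as the paper: project to $S$, pass to the full deck-group orbit (your $\pi^{-1}(\gamma)$ is the paper's $\gamma'$), use monotonicity of volume under drilling plus multiplicativity of volume in the degree-$d$ cover, and compare lengths via the fact that each component upstairs covers its image downstairs. Your write-up is somewhat more careful than the paper's about why the projected collection is filling and about citing the volume-monotonicity step, but the argument is the same.
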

\begin{proof}
Given a filling set $\gamma$ of periodic geodesics on $\hat{S}$, let
$\gamma'$ be the union of all translates of $\gamma$ under the group
of deck transformations and let $\delta$ be the projection of $\gamma$
(and $\gamma'$) to $S$.  Then, for each component (simple closed geodesic)
$\delta_i$ in $\delta$, $\gamma$ must contain one of the lifts of $\delta_i$
to $\hat{S}$, which are themselves closed geodesics of length at least
$l_S(\delta_i)$.  It follows that $l_S(\delta) \leq l_{\hat{S}}(\gamma)$
and, hence, that

\begin{align*}
\Vol(T^1\hat S\setminus\gamma)
 & \leq \Vol(T^1\hat S\setminus\gamma') = d\cdot \Vol(T^1S\setminus\delta) \\
 & \leq d\cdot A_S\, l_S(\delta) \\
 & \leq \left(d\cdot A_S\right) l_{\hat{S}}(\gamma).
\end{align*}
\end{proof}

\begin{figure}[ht]
\includegraphics[width=12cm]{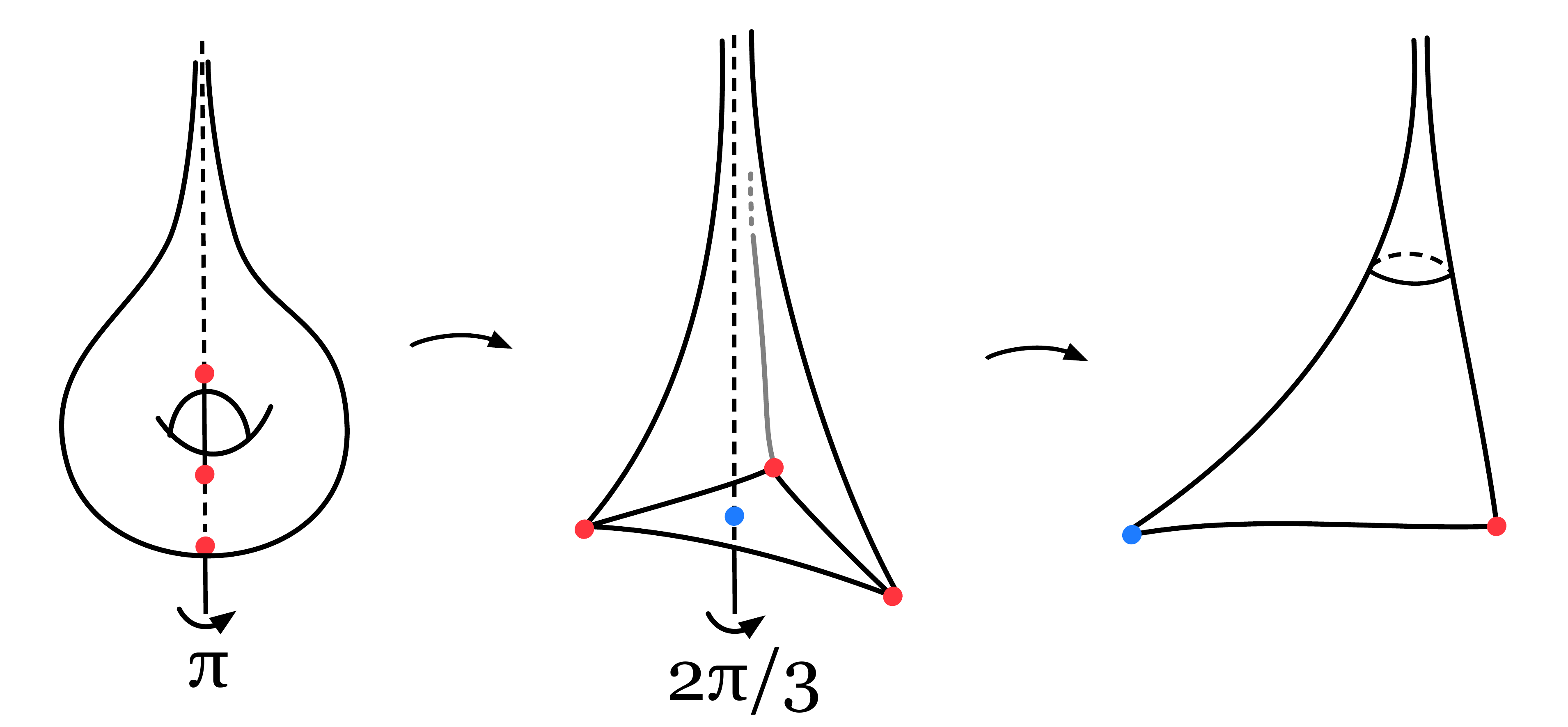}
\caption{The cover of $\MS$ by a once punctured torus.}\label{cover}
\end{figure}
\begin{figure}[ht]
\includegraphics[width=7cm]{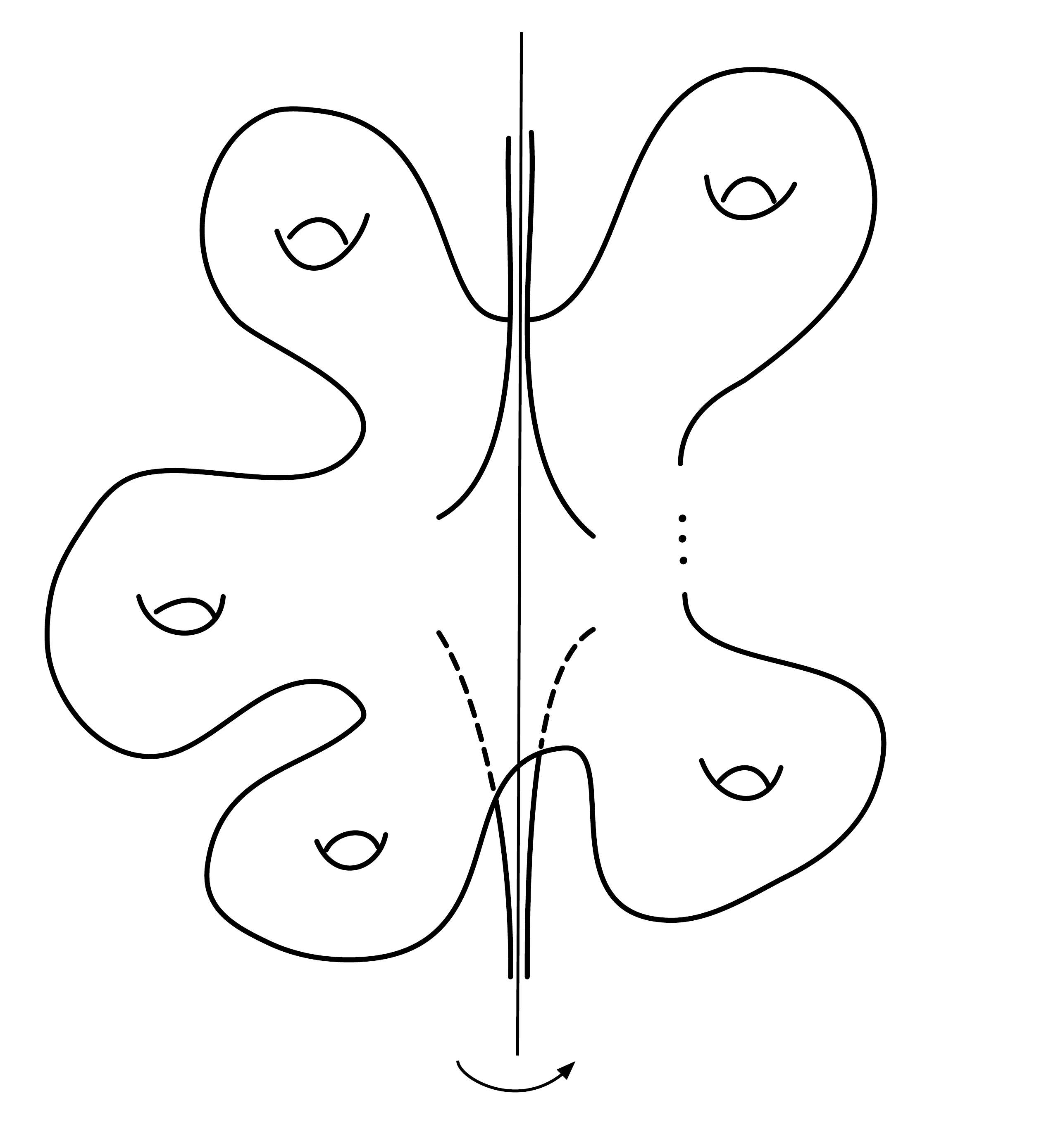}
\caption{The cover of the twice punctured torus by a twice punctured genus $g$ surface.}\label{genus}
\end{figure}
Finally, we use ideas of
Brooks on adding punctures to hyperbolic surfaces to deduce the general case:
\begin{proof}[Proof of Theorem~\ref{thm:mainthm}]
We have seen that the Theorem holds for any surface which is a cover
of the modular surface.
This includes the once-punctured torus (see Figure \ref{cover}) and its double
cover, the twice-punctured torus. (To construct this double cover,
arrange the two punctures symmetrically about the axis of revolution of the
torus and apply a rotation by $\pi$.)
In the same manner, there is a $6k$-fold covering map from a $k$-punctured
torus to $\MS$. 
Next, a rotation through the two cusps in a symmetrically arranged
twice-punctured surface of genus $g$ (see Figure \ref{genus}) yields
a cover of the twice punctured torus, and thus of $\MS$.
The same construction gives a covering map from $S_{g,2+kg}$
(the surface of genus $g$ with $2+kg$ punctures) to $\MS$ by
permuting the punctures and mapping to the $k+2$ punctured torus.
Lastly, recall that the thrice punctured sphere is a $6$-fold cover of the
modular surface.  The same strategy now produces a $6(n-3)$-fold covering
map from a punctured sphere $S_{0,n}$ with $n>3$ to $\MS$ by arranging
an axis through two of the punctures and using a rotation acting
transitively on the rest of the punctures. 

We are now ready to consider a collection of periodic geodesics $\gamma$
on an arbitrary hyperbolic surface $S_{g,n}$.  Since we have already dealt
with the case where $n\equiv 2$ mod $g$, we just need a way
to increase the number of punctures by $k$ (chosen such that
$n+k\equiv 2$ mod $g$) to obtain a hyperbolic surface $S_{g,n+k}$ which
is a topological cover of $\MS$.  For this, mark $k$ arbitrary
points on $S_{g,n}$.  By \cite{Brooks:PlatonicSurfaces} (see especially
Lemma 3.1), there is a choice of metrics on $S_{g,n}$ and $S_{g,n+k}$
such that every closed geodesic in $S_{g,n}$ can be isotoped away from
the punctures with the resulting closed geodesic in $S_{g,n+k}$ having
length bounded by a constant times its length in $S_{g,n}$.

Accordingly, given a family $\gamma$ of closed geodesics in $S_{g,n}$,
let $\gamma'$ be the isotopic family of curves lying off the cusps.
We then have
\begin{align*}
\Vol(T^1S_{g,n}\setminus{\gamma}) & = \Vol(T^1S_{g,n}\setminus{\gamma'}) \\
 & \leq \Vol(T^1S_{g,n+k}\setminus{\gamma'}) \\
 & \leq C \abs{\gamma'} \leq C' \abs{\gamma}\,.
\end{align*}
Here, the first equality is the topological invariance of the volume
and the following inequality is the monotonicity of the volume of hyperbolic
three-manifolds under drilling (the circles lying over the
punctures).  The third claim is the Theorem in the case of $S_{g,n+k}$,
and the last step is the linear relation between the length of $\gamma'$
in $S_{g,n+k}$ and the length of $\gamma$ in $S_{g,n}$.
\end{proof}

\bibliographystyle{amsplain}
\bibliography{volumes}

\end{document}